\newcommand{\luk}{\L u\-ka\-si\-e\-w\-icz}
\newtheorem{thm}{Theorem}[section]
\newtheorem{lem}[thm]{Lemma}
\newtheorem{cor}[thm]{Corollary}
\newtheorem{prop}[thm]{Proposition}
\theoremstyle{definition}
\newtheorem{definition}[thm]{Definition}
\newtheorem{remark}[thm]{Remark}
\newtheorem{example}[thm]{Example}
\newcommand{\Mod}{{\rm Mod}}
\newcommand{\relint}{\mathsf{ri}}
\newcommand{\relb}{\mathsf{rb}}
\newcommand{\conv}{\mathsf{co}}
\newcommand{\cl}{\mathsf{cl}}
\newcommand{\ext}{\mathsf{ext}}
\newcommand{\free}{{\bf F}}
\begin{document}

\title[]{Three characterizations of strict coherence on infinite-valued events}

 \author[T. Flaminio]
        {Tommaso Flaminio}
\address{IIIA - CSIC\\ Campus de la Universidad Aut\`onoma de Barcelona s/n, 08193 Bellaterra, Spain} \email{tommaso@iiia.csic.es}
\date{}

\maketitle

\begin{abstract}
This  paper builds on a recent article  co-authored by the present author, H. Hosni and F. Montagna. It is meant to contribute to the logical foundations of probability theory on many-valued events and, specifically, to a deeper understanding of the notion of strict coherence. In particular, we will make use of geometrical, measure-theoretical and logical methods to provide three characterizations of strict coherence on formulas of infinite-valued \luk\ logic. 
\vspace{.1cm}

\noindent{\em Keywords}. De Finetti's coherence; strict coherence; faithful states; MV-algebras; \luk\ logic.
\end{abstract}

\section{Introduction and motivation.}\label{sec:intro}
In a collection of seminal contributions starting with \cite{deF0} and culminating in \cite{deF2}, de Finetti grounded subjective probability theory on an ideal betting game between two players, a bookmaker and a gambler, who  wager money on the occurrence 
of certain events $e_1,\ldots, e_k$. For each event $e_i$, gambler's payoffs are $1$ in case $e_i$ occurs, and $0$ otherwise.  The {\em probability} of an event $e_i$ is defined, by de Finetti, as the {\em fair selling price} fixed by the bookmaker for it. 

Conforming to a standard notation, bookmaker's prices for the events $e_1,\ldots, e_k$ will be  referred to as  {\em betting odds}  and an assignment $\beta:\{e_1,\ldots, e_k\}\to[0,1]$ of betting odds $\beta(e_i)=\beta_i$  will be called a {\em book}. 

De Finetti had no particular inclination towards identifying events in a precise logical ground \cite{FGH}. However, in order for his main result to be stated in precise mathematical terms, they will be understood, for the moment, as elements of a finitely generated free boolean algebra and hence coded by boolean formulas. Now, de Finetti's result reads as follows: 
let us fix  finitely many events $e_1,\ldots, e_k$   
and a book $\beta$ on them. A gambler must
choose {\em stakes} $\sigma_1,\ldots, \sigma_k\in \mathbb{R}$, one for each event, and pay to the bookmaker the amount $\sigma_i\cdot \beta_i$ for each $e_i$. 
When a (classical propositional) valuation $w$ determines $e_i$, the gambler gains $\sigma_i$ if $w(e_i) = 1$ and $0$ otherwise. The book $\beta$ is said to be {\em coherent} if there is no choice of stakes $\sigma_1,\ldots, \sigma_k\in \mathbb{R}$ such that for every valuation $w$
\begin{equation}\label{eq:balance}
\sum_{i=1}^k\sigma_i\cdot\beta_i-\sum_{i=1}^k\sigma_i\cdot w(e_i)=\sum_{i=1}^k \sigma_i(\beta_i-w(e_i))<0.
\end{equation}
The left hand side of (\ref{eq:balance}) captures the bookmaker's payoff, or {\em balance}, {\em relative to
the book $\beta$ under the valuation $w$}. 

 Note that a stake $\sigma_i$ may be negative. Following tradition, money transfers are so oriented that ``positive'' means ``gambler-to-bookmaker''. Therefore, if $\sigma_i<0$,  the bookmaker is forced to swap his role with the gambler: he has to pay $-\sigma_i\cdot\beta(e_i)$ to the gambler in hopes of winning $-\sigma_i$ in case $e_i$ occurs. 

De Finetti's Dutch-Book theorem characterizes coherent books as follows: a book $\beta$ on  events $e_1,\ldots, e_k$ pertaining to a boolean algebra ${\bf A}$ is coherent iff it extends to a finitely additive probability $P$ of ${\bf A}$, \cite{deF0}.

Along with the assumptions which regulate  de Finetti's coherence criterion,
condition (\ref{eq:balance}) above effectively forces the bookmaker to set
\emph{fair prices} for gambling on events $e_{1},\ldots ,e_{k}$. In other words,  upon regarding each event $e_i$ as a $\{0,1\}$-valued random variable, 
de Finetti's Dutch-Book theorem amounts to saying
that  coherent assessments are those with null
expectation. For, if the bookmaker publishes a book with positive
expectation (for him)  a logically infallible gambler will choose negative stakes
and inflict a {\em sure loss} on him, that is to say, a sure financial loss whatever the outcome of events.

Although coherence guards the bookmaker against the possibility of sure loss,  at the same time it may bar him from making a profit. To illustrate the idea, consider an event $e$ which is neither noncontradictory nor sure and the coherent book $\beta(e)=0$. 
If the gambler bets $1$ on $e$, then her balance is as follows: she pays $1 \cdot 0 = 0$ and gets back $0$ if $w(e) = 0$ and $1$ if $w(e) = 1$. Hence, the bookmaker never wins and possibly loses.

This rather odd feature of coherence was questioned in the mid 1950's first by Shimony \cite{Shi}  and then by Kemeny \cite{Kem}. These authors studied a refinement of de Finetti's coherence that nowadays goes under the name of {\em strict coherence} (see \cite{FHM18}). Intuitively, a choice of prices is strictly coherent if every possibility of loss, for the bookmaker,  is paired by a possibility of gain.
Precisely, a book $\beta$ is {\em strictly coherent} if, for each choice of stakes $\sigma_1,\ldots, \sigma_k\in \mathbb{R}$,  the existence of a valuation $w$ such that $\sum_{i=1}^k\sigma_i(\beta_i-w(e_i))<0$ implies the existence of another one $w'$ for which $\sum_{i=1}^k\sigma_i(\beta_i-w'(e_i))>0$.

Interest in the condition of strict coherence was prompted by Carnap's analysis of what he called ``regular'' probability functions in \cite{Carnap} (see also \cite[Chapter 10]{Paris2}) and which we will term as {\em Carnap probabilities}.  Those functions arise  from the  axiomatization of finitely additive probabilities by strengthening the usual normalization axiom in the right-to-left direction: $1$ (respectively, $0$) is assigned only to tautologies (respectively, contradictions).  In other words, a  probability function  $P$ is Carnap, if it is normalized, finitely additive and it satisfies $P(e)\neq 0$ for every noncontradictory event $e$.
%
In \cite{FHM18} the authors characterized strictly coherent books on boolean events  
in terms of their extendability to Carnap probabilities\footnote{Carnap probabilities are the same as Carnap-regular probabilities of \cite{FHM18}. In the present paper we adopt this simplified notation in order to avoid any misleading interpretation of the adjective ``regular'' which has indeed different meanings if referred to probability functions or to Borel measures which will be discussed in Section \ref{sec:statesFaith}.}.

Several authors proposed 
generalizations of de Finetti's coherence criterion and his Dutch-Book theorem to events not pertaining to boolean logic. Paris in \cite{Paris}  extended the classical Dutch-Book theorem to several non-classical propositional logics including the  modal logics K, T, S4, S5 and certain paraconsistent logics as well. 
In \cite{Weat}, Weatherson considered the case of events pertaining to intuitionistic logic and in \cite{MuBookmaking}, Mundici extended de Finetti's criterion to the case of infinite-valued \luk\ logic and MV-algebras \cite{CDM,Mu12}.  

In the MV-algebraic realm valuations are $[0,1]$-valued and hence they correspond to homomorphisms into the {\em standard MV-algebra} defined on the unit interval $[0,1]$. De Finetti's coherence criterion immediately translates to the MV-setting with no extra conditions and the main result of \cite{MuBookmaking} (see also \cite{KuMu}) is a de Finetti-like theorem which characterizes coherent books on {\em \luk\ events} as those which are extendible to {\em states}, i.e., $[0,1]$-valued normalized and finitely additive maps of an MV-algebra.

From the perspective of reasoning about uncertainty, the interest in \luk\ events is twofold: on the one hand these events capture properties of the world which are better described as {\em gradual} rather than {\em yes-or-no}; on the other hand, they also mimic bounded random variables. Indeed, any  \luk\ event $e$ 
may be regarded as a $[0,1]$-valued continuous function $f_e$ on a compact Hausdorff space 
(see \cite[Theorem 9.1.5]{CDM} and Section \ref{sec:introAlg}) and any state of $e$ coincides with the expected value of $f_e$ (\cite{KroupaInt, Panti}, \cite[Remark 2.8]{FHL} and Section \ref{sec:statesFaith}).
Therefore, up to renormalization, Mundici's generalization of de Finetti's theorem  \cite[Theorem 2.1]{MuBookmaking} implies the Dutch-Book theorem for books on bounded continuous random variables.

For events pertaining to the restricted class of {\em finite-dimensional} MV-algebras, in \cite[Theorem 6.4]{FHM18} the authors proved a de Finetti-like theorem for strictly coherent books in terms of their extendability to {\em faithful} states, i.e., states which satisfy $s(a)\neq0$ for all $a\neq0$. Nevertheless, extending \cite[Theorem 6.4]{FHM18} to more general classes of MV-algebras is  delicate because, as a consequence of seminal results by Mundici \cite[Proposition 3.2]{Mu}, Kelley \cite{Kelley}, and Gaifman \cite{Gaifman}, an MV-algebra may not have a faithful state.

In this paper we will investigate strictly coherent books on \luk\ events, i.e.,  elements of a finitely generated free MV-algebra. Our results  sensibly extend the results of \cite{FHM18}. In particular, we will provide three characterizations of strict coherence  by adopting  geometrical, measure-theoretical and logical methods. In more details:
\vspace{.2cm}

\noindent {\em Geometrical approach}: the functional representation of $n$-generated free MV-algebras in terms of $n$-variable, piecewise-linear  continuous functions (see \cite[Theorem 9.1.5]{CDM} and \cite{McN}) implies that the set of all coherent books on a finite set $\Phi$ of \luk\ events forms a convex polyhedron $\mathscr{D}_\Phi$ of $\mathbb{R}^k$. The main result of Section \ref{sec:geoStrict} shows that strictly coherent books on $\Phi$ form a subset of $\mathbb{R}^k$ which coincides with the relative interior of $\mathscr{D}_\Phi$. 
\vspace{.2cm}

\noindent {\em Measure-theoretical approach}: faithful states are the  MV-algebraic analog of   Carnap probabilities on a boolean algebra.  In Section \ref{sec:statesFaith} we will first give an integral representation theorem for faithful states on finitely generated free MV-algebras and then we will  characterize strictly coherent books on \luk\ events as those which extend to a faithful state.
\vspace{.2cm}

\noindent {\em Logical approach}:  the  relation among free MV-algebras, rational polyhedra and deducibility in propositional \luk\ logic, will enable us to characterize the notions of coherence and strict coherence  within propositional \luk\ logic (see Section \ref{sec:log}). In our opinion this result is interesting because it shows that propositional \luk\ logic is capable to capture foundational aspects of probability theory on infinite-valued events.
 \vspace{.2cm}
 
In the next section we will introduce necessary preliminaries about MV-algebras and rational polyhedra.

\section{Preliminaries.}\label{sec:introAlg}

The algebraic framework of this paper is that of MV-algebras (see \cite{CDM,Mu12}), i.e., 
the Lindenbaum algebras of \luk\ infinite-valued logic \cite[Definition 4.3.1]{CDM}. A typical example of an MV-algebra is the {\em standard algebra} $[0,1]_{MV}=([0,1], \oplus,  \neg, 0)$ where $x\oplus y=\min\{1, x+y\}$ and $\neg x=1-x$. Further operations, together with their standard interpretation, are defined in $[0,1]_{MV}$ as follows: $x\odot y=\neg(\neg x\oplus\neg y)=\max\{0,x+y-1\}$, $x\to y=\neg x\oplus y=\min\{1, 1-x+y\}$,  $x \wedge y=x\odot(x\to y)=\min\{x,y\}$, $x\vee y=\neg(\neg x\wedge\neg y)=\max\{x,y\}$, $1=\neg 0$. This structure generates the class of MV-algebras both as a variety and as quasi-variety \cite{Chang}.

 Another relevant example of an MV-algebra is given by  the free $n$-generated MV-algebra $\free_n$.  
By a standard universal algebraic argument, $\free_n$ is  the MV-algebra of  functions  $f:[0,1]^n\to[0,1]$ generated by the projection maps \cite[Proposition 3.1.4]{CDM} and whose operations $\odot, \oplus, \to, \wedge, \vee$ and $\neg$  are defined via the pointwise application of those in $[0,1]_{MV}$. By McNaughton theorem, up to isomorphism, $\free_n$ coincides with the MV-algebra of {\em $n$-variable McNaughton functions}: maps  from $[0,1]^n$ to $[0,1]$ which are continuous, piecewise linear, with finitely many pieces, and such that each piece has integer coefficients (cf. \cite[Theorem 9.1.5]{CDM} and \cite{McN}). 
For each $f\in \free_n$, the {\em oneset} of $f$ is $\{x\in [0,1]^n\mid f(x)=1\}$ and the {\em zeroset} of $f$ is $\{x\in [0,1]^n\mid f(x)=0\}$.

The free $n$-generated MV-algebra is, up to isomorphism, the Lindenbaum algebra of \luk\ logic $\L$ in a language with $n$ propositional variables and 
 $[0,1]$-valuations of $\L$ are exactly the homomorphisms of $\free_n$ to $[0,1]_{MV}$. Furthermore, every $x\in [0,1]^n$ determines the homomorphism $h_x:f\in \free_n\mapsto f(x)\in [0,1]_{MV}$.

\begin{prop}[{\cite[Lemma 3.1]{MuBookmaking}}]\label{prop1}
For each finite $n$, homomorphisms of $\free_n$ to $[0,1]_{MV}$, $[0,1]$-valued valuations of \luk\ logic on $n$ variables and points of the $n$-cube $[0,1]^n$ are in one-one correspondence.
\end{prop}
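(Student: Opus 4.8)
The plan is to establish the three-fold correspondence by exhibiting the natural bijections between the three sets and checking they compose to the identity. First I would recall that $\free_n$ is, up to isomorphism, the Lindenbaum algebra of \luk\ logic $\L$ on $n$ propositional variables, so that a homomorphism $h\colon \free_n\to[0,1]_{MV}$ is literally the same datum as a $[0,1]$-valued valuation of $\L$ restricted to the $n$ variables: a valuation assigns values in $[0,1]$ to the variables and extends along the connectives, and since $\free_n$ is freely generated by the (classes of the) variables, such an assignment extends uniquely to an MV-homomorphism; conversely any homomorphism is determined by its values on the generators. This gives the bijection between the second and first sets essentially by the universal property of the free algebra and is the routine half of the argument.

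Next I would produce the map from points to homomorphisms, which is already written down in the excerpt: $x\in[0,1]^n\mapsto h_x$, where $h_x(f)=f(x)$. One checks $h_x$ is an MV-homomorphism because the operations of $\free_n$ are defined pointwise from those of $[0,1]_{MV}$, so evaluation at a fixed point $x$ commutes with every connective and sends the constant $0$ to $0$. In the other direction, given a homomorphism $h$, set $x_h=(h(\pi_1),\dots,h(\pi_n))\in[0,1]^n$, where $\pi_i$ is the $i$-th projection (a generator of $\free_n$). The two assignments are mutually inverse: $x\mapsto h_x\mapsto (h_x(\pi_1),\dots,h_x(\pi_n))=(\pi_1(x),\dots,\pi_n(x))=x$, and conversely $h\mapsto x_h\mapsto h_{x_h}$, which agrees with $h$ on the generators $\pi_i$ and hence, since $\free_n$ is generated by the $\pi_i$ and both are homomorphisms, agrees with $h$ on all of $\free_n$. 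Composing with the first bijection then matches points of $[0,1]^n$ with valuations as well.

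The only genuine content — and the step I would flag as the main (mild) obstacle — is the surjectivity of $x\mapsto h_x$, equivalently the claim that every MV-homomorphism $\free_n\to[0,1]_{MV}$ is of the form $h_x$ for some $x$. This does not follow from pure universal algebra; it uses the concrete functional representation of $\free_n$ as McNaughton functions together with the fact that an arbitrary tuple $x\in[0,1]^n$ assigned to the generators genuinely extends to a homomorphism into $[0,1]_{MV}$ — which is exactly what $h_x$ accomplishes, since McNaughton functions are honest $[0,1]^n\to[0,1]$ functions evaluable at $x$. Thus the free-algebra universal property supplies a homomorphism extending any generator-assignment, the McNaughton representation identifies it with $h_x$, and uniqueness (generators generate) closes the loop. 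I would present this as a short verification, citing \cite[Proposition 3.1.4]{CDM} and \cite[Theorem 9.1.5]{CDM} for the functional representation, and noting that the three-way correspondence is then immediate by transitivity of the two bijections established above.
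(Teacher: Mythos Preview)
The paper does not prove this proposition; it is stated with a citation to \cite[Lemma 3.1]{MuBookmaking} and no argument is supplied. Your proposal is correct and gives precisely the standard argument: the universal property of $\free_n$ (free on the projections $\pi_1,\ldots,\pi_n$) puts homomorphisms $\free_n\to[0,1]_{MV}$ in bijection with tuples in $[0,1]^n$, and the functional representation identifies the homomorphism attached to $x$ with evaluation $f\mapsto f(x)$.

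One minor expository point: you flag surjectivity of $x\mapsto h_x$ as the step that ``does not follow from pure universal algebra,'' but in fact it does. Once $\free_n$ is known to be free on the generators $\pi_1,\ldots,\pi_n$, every homomorphism to $[0,1]_{MV}$ is uniquely determined by, and arises from, an assignment of values to those generators; your own computation $h\mapsto x_h\mapsto h_{x_h}=h$ already proves surjectivity using only freeness. The McNaughton representation is what makes the description $h_x(f)=f(x)$ concrete (and is what the paper needs downstream), but it is not required for the bijection itself.
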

For every closed subset $C$ of $[0,1]^n$, let $I_C$ be the subset of $\free_n$ of those functions whose zeroset  contains  $C$. Then, 
$I_C$ is an {\em ideal} of $\free_n$ and the quotient $\free_n/I_C$ is the MV-algebra whose universe coincides with the set 
given by the restrictions to $C$ of the functions of $\free_n$ (see \cite[Proposition 3.4.5]{CDM}). In particular,  when $C$ has $k$ elements, the quotient MV-algebra $\free_n/I_C$ is isomorphic to the product algebra $[0,1]_{MV}^k$, \cite{CiMa12}.
 The finite powers of $[0,1]_{MV}$ --- called {\em locally weakly finite} MV-algebras in \cite{CiMa12}--- are called,  in this paper,  {\em finite-dimensional}.

\subsection{Rational polyhedra, regular complexes and McNaughton functions.}
In this section we will prepare the necessary results concerning rational and regular complexes (see \cite{Ewald}) and their relation with 
finitely generated free MV-algebras. 
We invite the reader to consult \cite{KroupaPoly,Mu12,Mu11} for  background.

Let $k=1,2,\ldots$. By a  {\em (rational) convex polyhedron} (or {\em (rational) polytope}) of $\mathbb{R}^k$ we mean the convex hull of finitely many points of $\mathbb{R}^k$ ($\mathbb{Q}^k$ respectively); a (rational) polyhedron is  a finite union of (rational) convex polyhedra. Given any  polytope $\mathscr{P}$, we respectively denote by $\ext\; \mathscr{P}$, $\relint\; \mathscr{P}$, $\relb\; \mathscr{P}$ the set of its extremal points, its relative interior and its relative boundary. Since each polytope $\mathscr{P}$ is closed, $\mathscr{P}=\relint\;\mathscr{P}\cup\relb\; \mathscr{P}$. Further, for all vectors $x, y\in \mathbb{R}^k$, we denote $x\cdot y$ their scalar product and by $|x|$ the norm of $x$.

\begin{lem}\label{lemma:conv1}
For each polytope $\mathscr{P}$ of $\mathbb{R}^k$, the following conditions hold:
\begin{enumerate}

\item For every $e\in \relb\;\mathscr{P}$, there exists $\sigma\in \mathbb{R}^k$ such that, for all $\gamma\in \mathscr{P}$, $\sigma\cdot e\leq \sigma\cdot \gamma$;

\item Let $\beta\in \relint\;\mathscr{P}$. Then, there exists $\sigma\in \mathbb{R}^k$ such that the sets $\mathscr{P}_\sigma^+=\{\gamma\in \mathscr{P}\mid \gamma\cdot \sigma<\beta\cdot\sigma\}$ and $\mathscr{P}_\sigma^-=\{\gamma\in \mathscr{P}\mid \gamma\cdot \sigma>\beta\cdot\sigma\}$ are nonempty;
\item Let $\beta\in \relint\;\mathscr{P}$. Then there exist $\sigma\in \mathbb{R}^k$, $e_1,e_2\in \ext\;\mathscr{P}$ such that $e_1\in \mathscr{P}_\sigma^+$ and $e_2\in \mathscr{P}_\sigma^-$;
\item $\gamma\in \relint\;\mathscr{P}$ iff there exists a map $\lambda:\ext\;\mathscr{P}\to[0,1]$ such that $\sum_{e\in \ext\;\mathscr{P}}\lambda(e)=1$, $\lambda(e)>0$ for all $e\in \ext\;\mathscr{P}$ and $\gamma=\sum_{e\in \ext\;\mathscr{P}}\lambda(e)\cdot e$. 
\end{enumerate}
\end{lem}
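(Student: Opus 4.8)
The plan is to prove the four items by appeal to standard finite-dimensional convexity, treating each as a separate but related consequence of the separating hyperplane theorem together with the theory of faces of a polytope.

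First I would recall the basic structural facts: since $\mathscr{P} = \conv(\ext\;\mathscr{P})$ is a compact convex set, it is contained in an affine subspace $\mathsf{aff}\;\mathscr{P}$, and $\relint\;\mathscr{P}$ is the interior of $\mathscr{P}$ relative to the topology of $\mathsf{aff}\;\mathscr{P}$, with $\relb\;\mathscr{P} = \mathscr{P}\setminus\relint\;\mathscr{P}$. For item (1), given $e\in\relb\;\mathscr{P}$, I would apply the supporting hyperplane theorem: since $e$ lies on the boundary of $\mathscr{P}$ within $\mathsf{aff}\;\mathscr{P}$, there is a hyperplane of $\mathsf{aff}\;\mathscr{P}$ through $e$ with $\mathscr{P}$ on one side; extending the defining linear functional to all of $\mathbb{R}^k$ (adding any component orthogonal to $\mathsf{aff}\;\mathscr{P}$ does not change values on $\mathscr{P}$, or one can just take the functional to vanish on the orthogonal complement) produces the desired $\sigma$ with $\sigma\cdot e\leq\sigma\cdot\gamma$ for all $\gamma\in\mathscr{P}$.

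For item (2), the idea is dual: if $\beta\in\relint\;\mathscr{P}$ and $\mathscr{P}$ is not a single point (the degenerate case $\mathscr{P}=\{\beta\}$ must be handled separately, and there both $\mathscr{P}_\sigma^+$ and $\mathscr{P}_\sigma^-$ would be empty — so I should note that the statement implicitly assumes $\dim\mathscr{P}\geq 1$, or reformulate; I will assume $\mathscr{P}$ is not a singleton), then pick any direction $u$ parallel to $\mathsf{aff}\;\mathscr{P}$; since $\beta$ is in the relative interior, moving a small amount in both $+u$ and $-u$ stays inside $\mathscr{P}$, so setting $\sigma = u$ makes both $\mathscr{P}_\sigma^+$ and $\mathscr{P}_\sigma^-$ nonempty. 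Item (3) refines (2): the linear functional $\gamma\mapsto\gamma\cdot\sigma$ attains its minimum and maximum over the polytope $\mathscr{P}$ at extreme points (a standard fact, since a linear function on a polytope attains its extrema on a face, and every face contains an extreme point). Because $\beta\cdot\sigma$ is strictly between the values witnessed in $\mathscr{P}_\sigma^+$ and $\mathscr{P}_\sigma^-$, the minimizing extreme point $e_1$ satisfies $e_1\cdot\sigma \leq \min_{\mathscr{P}_\sigma^+}(\cdot\,\sigma) < \beta\cdot\sigma$, hence $e_1\in\mathscr{P}_\sigma^+$, and symmetrically the maximizer $e_2\in\mathscr{P}_\sigma^-$.

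The main obstacle, and the part I expect to require the most care, is item (4): the characterization of $\relint\;\mathscr{P}$ as the set of \emph{strict} convex combinations of extreme points. The inclusion $\supseteq$ (any strictly positive convex combination of all extreme points lies in the relative interior) follows because such a point is a relative-interior point of $\conv(\ext\;\mathscr{P})$ — one shows that from such a point one can move a little in every direction spanning $\mathsf{aff}\;\mathscr{P}$ while remaining a convex combination with positive coefficients. The harder direction $\subseteq$ requires showing that every $\gamma\in\relint\;\mathscr{P}$ can be written with \emph{all} coefficients positive: by Carathéodory $\gamma$ is \emph{some} convex combination of extreme points, say $\gamma = \sum_j \mu_j e_j$ over a subset; then, using that $\gamma$ is a relative interior point, for each extreme point $e$ not yet appearing one can perturb $\gamma$ slightly toward $e$ while staying in $\mathscr{P}$, i.e. $\gamma + \varepsilon(\gamma - e)\in\mathscr{P}$ for small $\varepsilon>0$, which when re-expanded and averaged with the segment to $e$ yields a representation giving $e$ positive weight; iterating over all extreme points and averaging the finitely many resulting representations produces one with every coefficient strictly positive. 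I would present this averaging argument carefully, since it is the only genuinely nontrivial combinatorial point, and cite \cite{Ewald} or a standard convexity reference for the supporting facts used in (1)–(3).
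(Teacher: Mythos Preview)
Your proposal is correct and follows essentially the same route as the paper. Item (1) is handled identically via the supporting hyperplane theorem; item (2) uses the same idea of moving from $\beta$ in both directions along $\mathsf{aff}\;\mathscr{P}$ (you rightly flag the degenerate singleton case, which the paper tacitly excludes); your argument for (3) via extrema of linear functionals at vertices is just the direct form of the paper's contradiction argument; and for (4) the paper simply cites \cite[Lemma 6.1 (1)]{FHM18}, whereas you sketch the standard averaging argument in full --- note only that your phrase ``perturb $\gamma$ slightly toward $e$'' should read ``away from $e$'', matching your correct formula $\gamma+\varepsilon(\gamma-e)\in\mathscr{P}$.
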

Before proving the lemma, recall that any hyperplane $H$ of $\mathbb{R}^k$ separates the space in two half spaces denoted  $H^+$ and $H^-$. 
The above claims (2) and (3) state that, if $\beta$ is a point in the relative interior of a polytope $\mathscr{P}$, then  there exists a hyperplane $H$ passing through $\beta$ such that, respectively: both $H^+\cap \mathscr{P}$ and $H^-\cap \mathscr{P}$ are nonempty; each $H^+\cap \mathscr{P}$ and $H^-\cap \mathscr{P}$  contains an extremal point of $\mathscr{P}$.
\begin{proof}
(1) is the well-known supporting hyperplane theorem, see \cite[Theorem 14]{MS71}.
\vspace{.2cm}

\noindent(2) Let $\beta\in \relint\; \mathscr{P}$. Let $\Sigma$ be a sphere of radius $r$ and centered at $\beta$ and  contained in $\relint\;\mathscr{P}$. The existence of $\Sigma$ is ensured by definition of relative interior \cite[Chapter I, Definition 1.8]{Ewald}. Let $\sigma$ be a vector of origin $\beta$. Suppose $\sigma$ is not orthogonal to the affine hull of $\mathscr{P}$ and also $0<|\sigma|<r$. Trivially, $(\sigma-\beta)\cdot \sigma<\beta\cdot \sigma<(\sigma+\beta)\cdot\sigma$. Upon noting that $\sigma-\beta, \sigma+\beta\in \Sigma$, our claim is settled. 
\vspace{.2cm}

\noindent
(3)  By way of contradiction, assume that for no $e\in \ext\;\mathscr{P}$, $e\cdot \sigma<\beta\cdot\sigma$. Equivalently, for all $e\in \ext\;\mathscr{P}$,
\begin{equation}\label{eqextr}
e\cdot \sigma\geq\beta\cdot\sigma.
\end{equation}
Since 
$\mathscr{P}^+$  is nonempty, in view of (2), let $\tau\in \mathscr{P}\cap \mathscr{P}^+$, i.e.,
\begin{equation}\label{eqTauP}
\tau\cdot\sigma<\beta\cdot\sigma.
\end{equation}
If $\tau\in \ext\;\mathscr{P}$ the claim is settled. Assume that $\tau\not\in \ext\;\mathscr{P}=\{e_1,\ldots,e_l\}$. Then there are $\lambda_1,\ldots,\lambda_l\in[0,1]$ such that $\sum_i\lambda_i=1$ and $\tau=\sum_{i}\lambda_i\cdot e_i$. From \eqref{eqextr} it follows that $e_i\cdot \sigma\geq \beta\cdot \sigma$, and hence $\sum\lambda_i e_i\cdot \sigma\geq \beta\cdot \sigma$, that is, $\tau\cdot\sigma\geq \beta\geq \sigma$, which contradicts \eqref{eqTauP}.

\vspace{.2cm}

\noindent(4) See \cite[Lemma 6.1 (1)]{FHM18}.
\end{proof}

Let $k=1,2,\ldots$ and let $x=\langle n_1/d_1, \ldots, n_k/d_k\rangle$ be a rational vector in $\mathbb{R}^k$ with $n_i$ and $d_i$ relatively prime for all $i=1,\ldots,k$. Denote by ${\rm den}(x)$  the least common multiple of $d_1,\ldots, d_k$. The {\em homogeneous correspondent} of $x$ is the vector 
$$
\left\langle \frac{n_1}{d_1}\cdot {\rm den}(x),\ldots, \frac{n_k}{d_k}\cdot {\rm den}(x), {\rm den}(x) \right\rangle\in \mathbb{Z}^{k+1}.
$$
 Let $k=1,2,\ldots$ and $m=0,1,\ldots, k$. A rational $m$-simplex $\conv(x_1,\ldots, x_m)\subseteq \mathbb{R}^k$ is said to be {\em regular}
 if the set of the homogeneous correspondents of $x_1, \ldots, x_m$ is part of a basis of the abelian group $\mathbb{Z}^{m+1}$ \cite[Chapter V, Definition 1.10]{Ewald}. 
 A {\em regular complex} $\Delta$ is a simplicial complex all of whose simplexes are regular\footnote{Recall that  a {\em simplicial complex} $\Delta$ is a nonempty finite set of simplexes  such that: the face of each simplex in $\Delta$ belongs to $\Delta$ and for each pair of simplexes $T_1, T_2\in \Delta$ their intersection is either empty, or it coincides with a common face of $T_1$ and $T_2$.}. 
  Unless otherwise specified, all regular simplexes in this paper are over
  the $n$-cube $[0,1]^n$ (i.e., their faces constitute a {\em unimodular triangulations of $[0,1]^n$}, in the terminology of \cite{MuBookmaking}). Thus, we will  say that $\Delta$ is a {\em regular complex of $[0,1]^n$} without danger of confusion.
  From the regularity of $\Delta$ it follows that $\Delta$ is rational.
   We will denote by $V(\Delta)$ the finite set of rational vertices of $\Delta$, i.e., the union of the set of the vertices of the simplexes in $\Delta$.

Let $\Phi$ be a finite subset of the free $n$-generated MV-algebra $\free_n$. Up to isomorphism, we can (and we will, throughout this paper) think of $\Phi$ as  a finite set of $n$-variable McNaughton functions. Further, if not otherwise specified, we will assume that $\Phi$ has $k$ elements, denoted $f_1,\ldots, f_k$. Following \cite[\S3]{MuBookmaking}, for any $\Phi$, there exists a regular complex $\Delta$ of  $[0,1]^n$ which {\em linearizes} $\Phi$ in the sense that 
each $f_i$ is linear over each simplex of $\Delta$.

\begin{example}\label{example-triang1}
Let us fix $n=2$ and 
consider $\Phi=\{x, y, x\oplus y\}$. Consider  the regular complexes $\Delta_1$ and $\Delta_2$ of Figure \ref{figure:triangulation1} and  whose vertices are $v^1_1=\langle0,0\rangle$, $v^1_2=\langle0,1\rangle$, $v^1_3=\langle1,0\rangle$, $v^1_4=\langle 1,1\rangle$, $v^1_5=\langle 1/2,1/2\rangle$ for $\Delta_1$ and $v^2_1=\langle0,0\rangle$, $v^2_2=\langle0,1\rangle$, $v^2_3=\langle1,0\rangle$, $v^2_4=\langle 1,1\rangle$, $v^2_5=\langle 1/2,1/2\rangle$, $v_6^2=\langle 1/3, 1/3\rangle$, $v_7^2=\langle1/2, 0\rangle$, $v_8^2=\langle 0, 1/2\rangle$ for $\Delta_2$. Then $\Delta_1$ is union of four maximal simplexes, while $\Delta_2$ is union of eight simplexes, see Figure \ref{figure:triangulation1}

Both $\Delta_1$ and $\Delta_2$ linearize $\Phi$. Indeed, for each regular simplex $T$ of $\Delta_1$ and each simplex $T'$ of $\Delta_2$, the restriction of each function $f\in \Phi$ to $T$ and $T'$ is  linear.

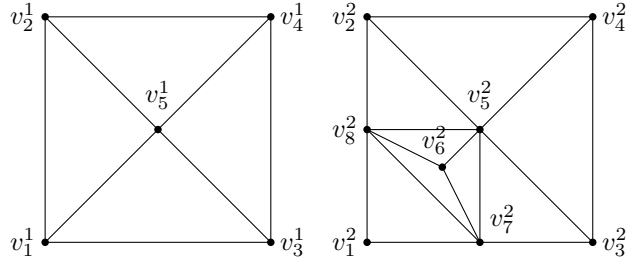
\begin{figure}[h!]
\begin{center}
   \begin{tikzpicture}[scale=3] 
 \coordinate[label=left: {$v_1^1$}] (A) at (0,0) ; 
   \coordinate[label=right: {$v_3^1$}] (B) at (1,0);
       \coordinate[label=right: {$v_4^1$}]  (C) at (1,1);
   \coordinate[label=left: {$v_2^1$}] (D) at (0,1) ; 
       \coordinate[label=above: {$v_5^1$}] (E) at (0.5,0.55) ; 

\node [fill=black,circle,inner sep=1pt,label=-45:{}]  at (1/2,1/2){};
\node [fill=black,circle,inner sep=1pt,label=-45:{}]  at (0,0){};
\node [fill=black,circle,inner sep=1pt,label=-45:{}]  at (0,1){};
\node [fill=black,circle,inner sep=1pt,label=-45:{}]  at (1,0){};
\node [fill=black,circle,inner sep=1pt,label=-45:{}]  at (1,1){};

  \draw (A) -- (B); 
    \draw  (A) -- (C);
      \draw  (D) -- (B);  
  \draw  (A) -- (D);
  \draw  (B) -- (C);
  \draw  (D) -- (C);
  \end{tikzpicture}
   \begin{tikzpicture}[scale=3] 
 \coordinate[label=left: {$v_1^2$}] (A) at (0,0) ; 
   \coordinate[label=right: {$v_3^2$}] (B) at (1,0);
       \coordinate[label=right: {$v_4^2$}]  (C) at (1,1);
   \coordinate[label=left: {$v_2^2$}] (D) at (0,1) ; 
       \coordinate[label=above: {$v_5^2$}] (E) at (0.5,0.55) ; 
             \coordinate[label=above: {$v_6^2$}] (F) at (0.3,0.35) ; 
              \coordinate[label=above: {$v_7^2$}] (G) at (0.6,0) ; 
                 \coordinate[label=left: {$v_8^2$}] (H) at (0,0.5) ; 

\node [fill=black,circle,inner sep=1pt,label=-45:{}]  at (1/2,1/2){};
\node [fill=black,circle,inner sep=1pt,label=-45:{}]  at (0,0){};
\node [fill=black,circle,inner sep=1pt,label=-45:{}]  at (0,1){};
\node [fill=black,circle,inner sep=1pt,label=-45:{}]  at (1,0){};
\node [fill=black,circle,inner sep=1pt,label=-45:{}]  at (1,1){};
\node [fill=black,circle,inner sep=1pt,label=-45:{}]  at (0,1/2){};
\node [fill=black,circle,inner sep=1pt,label=-45:{}]  at (1/2,0){};
\node [fill=black,circle,inner sep=1pt,label=-45:{}]  at (1/3,1/3){};

  \draw (A) -- (B); 
   \draw(1/3,1/3)--(C);
   \draw(1/2,0)--(0,1/2);
   \draw(1/2,0)--(1/3,1/3);
   \draw(1/3,1/3)--(0,1/2);
      \draw  (D) -- (B);  
  \draw  (A) -- (D);
  \draw  (B) -- (C);
  \draw  (D) -- (C);
  \draw(1/2,0)--(1/2,1/2);
  \draw(1/2,1/2)--(H);
  \end{tikzpicture}
  \end{center}
  \caption{{\small The two regular complexes $\Delta_1$ (on the left) and $\Delta_2$ (on the right) of the square $[0,1]^2$. Every function  $x, y$ and $x\oplus y$ is linear over each  simplex of $\Delta_1$ and $\Delta_2$.}}\label{figure:triangulation1}
\end{figure}

\hfill$\Box$
\end{example}

Let $\Delta$ be a regular complex of $[0,1]^n$, and let $v_i$ be one of its vertices. The {\em normalized Schauder hat at $v_i$ (over $\Delta$)} is the uniquely determined continuous   function $\hat{h}_i:[0,1]^n\to[0,1]$ which  is linear over each simplex of $\Delta$ and which attains the value $1$ at $v_i$ and $0$ at all other vertices of $\Delta$. The regularity of $\Delta$ ensures that each linear piece of each $\hat{h}_i$ has  integer coefficients and hence $\hat{h}_i\in \free_n$. By  definition of normalized Schauder hat, $\Delta$ linearizes each $\hat{h}_i$. Further, the following result holds:

\begin{lem}\label{lemmaSchauder}
Let $\Phi$ be a finite subset of $\free_n$, let $\Delta$ be a regular complex linearizing  $\Phi$ and let $v_1,\ldots, v_t$ be the vertices of $\Delta$. Then:
\begin{enumerate}
\item For each $i\neq j$, $\hat{h}_i\odot\hat{h}_j=0$;
\item $\bigoplus_{i=1}^t \hat{h}_i=1$;
\item For each $f\in \Phi$, $f=\bigoplus_{i=1}^t f(v_t)\cdot \hat{h}_{t}$;
\item Let $v_{i_1}, \ldots, v_{i_l}\in V(\Delta)$, and let $\mathscr{P}=\conv(v_{i_1},\ldots, v_{i_l})$. Then the function $p=\bigoplus_{v_j\in\Delta\cap\mathscr{P}}\hat{h}_j$ is a McNaughton function whose oneset is $\mathscr{P}$.
\end{enumerate}
\end{lem}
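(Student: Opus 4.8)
The plan is to extract one structural fact about the normalized Schauder hats and read off (1)--(4) from it. Fix a simplex $T$ of $\Delta$ with vertex set $\{v_{j_1},\dots,v_{j_m}\}$; every $x\in T$ has a unique barycentric representation $x=\sum_{k=1}^{m}\lambda_k(x)\,v_{j_k}$ with $\lambda_k(x)\ge 0$ and $\sum_k\lambda_k(x)=1$. First I would record two elementary facts: (i) $\hat h_{j_k}|_T=\lambda_k$ for each vertex $v_{j_k}$ of $T$, since $\hat h_{j_k}$ is linear (affine) on $T$ with value $1$ at $v_{j_k}$ and $0$ at the other vertices of $T$; and (ii) $\hat h_i|_T\equiv 0$ whenever $v_i\notin V(T)$, since then $\hat h_i$ is affine on $T$ and vanishes at all its vertices. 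These give $0\le\hat h_i\le 1$ on $[0,1]^n$ and $\sum_{i=1}^{t}\hat h_i(x)=\sum_k\lambda_k(x)=1$ for $x\in T$; as $\Delta$ triangulates the cube, the identity $\sum_{i=1}^{t}\hat h_i\equiv 1$ holds on all of $[0,1]^n$.

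From here (1)--(3) follow quickly. (2): the $\hat h_i$ are nonnegative with ordinary sum $1$, and for nonnegative reals $\bigoplus$ agrees with real addition below the cap (from $\min\{1,\min\{1,S\}+a\}=\min\{1,S+a\}$ for $a\ge0$, by induction $\bigoplus_i\hat h_i=\min\{1,\sum_i\hat h_i\}$), so $\bigoplus_{i=1}^{t}\hat h_i=1$. (1): $\hat h_i\odot\hat h_j=(\hat h_i+\hat h_j-1)\vee 0\equiv 0$ because $\hat h_i+\hat h_j\le\sum_{\ell=1}^{t}\hat h_\ell\equiv 1$. (3): since $\Delta$ linearizes $\Phi$, every $f\in\Phi$ is affine on each simplex $T$, so for $x\in T$, $f(x)=\sum_k\lambda_k(x)f(v_{j_k})=\sum_{i=1}^{t}f(v_i)\hat h_i(x)$ by (i)--(ii), i.e.\ $\sum_{i=1}^{t}f(v_i)\hat h_i=f$ as $[0,1]$-valued functions on $[0,1]^n$ (here $f(v_i)\cdot\hat h_i$ is the pointwise product of the real number $f(v_i)$ with $\hat h_i$); hence $\bigoplus_{i=1}^{t}f(v_i)\hat h_i=\min\{1,f\}=f$. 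I would flag in passing that the individual summands $f(v_i)\cdot\hat h_i$ need not be McNaughton functions, only their truncated sum is.

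For (4), put $W=V(\Delta)\cap\mathscr{P}$ and $p=\bigoplus_{v_j\in W}\hat h_j$; being a finite $\oplus$ of elements of $\free_n$, $p$ lies in $\free_n$, so it is a McNaughton function. By nonnegativity and $\sum_{i=1}^{t}\hat h_i\equiv 1$ we have $p(x)=1$ iff $\sum_{v_j\in W}\hat h_j(x)\ge 1$ iff $\hat h_j(x)=0$ for every $v_j\in V(\Delta)\setminus W$. Let $T_x$ be the carrier simplex of $x$, i.e.\ the unique simplex of $\Delta$ with $x$ in its relative interior; by (i)--(ii) applied to $T_x$, $\hat h_j(x)>0$ exactly when $v_j\in V(T_x)$, so $p(x)=1$ iff $V(T_x)\subseteq W\subseteq\mathscr{P}$. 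Thus the oneset of $p$ is $\bigcup\{T\in\Delta:V(T)\subseteq\mathscr{P}\}$; each such $T$ lies in $\conv(\mathscr{P})=\mathscr{P}$, giving one inclusion, and conversely every $x\in\mathscr{P}$ lies in the relative interior of a face of a simplex of $\Delta$ contained in $\mathscr{P}$, so $V(T_x)\subseteq\mathscr{P}$ and $x$ belongs to the oneset; hence the oneset is exactly $\mathscr{P}$.

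None of the steps is hard; the one place that needs care is the converse inclusion in (4), which requires $\mathscr{P}=\conv(v_{i_1},\dots,v_{i_l})$ to be a union of simplexes of $\Delta$. This is not automatic for an arbitrary convex hull of vertices of $\Delta$ (for instance, the diagonal of $[0,1]^2$ cut into two triangles is not a subcomplex, and there the oneset of $p$ is just the two endpoints), so the proof should state explicitly that $\mathscr{P}$ is taken to be the support of a subcomplex of $\Delta$ — equivalently, that $\Delta$ is chosen fine enough, which one may always arrange without disturbing the linearization of $\Phi$. Everything else is routine barycentric-coordinate bookkeeping.
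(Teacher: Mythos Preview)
Your argument is correct and substantially more thorough than the paper's. For parts (1)--(3) the paper simply cites \cite[Lemma 3.4]{MuBookmaking}, whereas you derive everything from the single barycentric identity $\sum_i\hat h_i\equiv 1$; this is self-contained and makes the implications among the four parts transparent (in particular, getting (1) from $\hat h_i+\hat h_j\le\sum_\ell\hat h_\ell=1$ is cleaner than a separate case analysis). For part (4) the paper gives only a three-line sketch: it checks $p(v)=1$ at vertices $v\in\mathscr P$ and then appeals to linearity on a simplex $\Sigma\ni x$ for non-vertex $x\in\mathscr P$. It neither addresses the inclusion $\{p=1\}\subseteq\mathscr P$ nor verifies that the vertices of $\Sigma$ relevant to $x$ actually lie in $\mathscr P$.

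Your caveat about (4) is exactly right and worth stating: as formulated, the claim fails unless $\mathscr P$ is the support of a subcomplex of $\Delta$, and your two-triangle counterexample (the main diagonal of $[0,1]^2$) shows this. The paper's sketch tacitly assumes this hypothesis. In the only place (4) is invoked later (Proposition~\ref{propConvComp}), the complex is chosen to support $\mathscr P$, so the application is sound; but your remark that the hypothesis should be explicit in the lemma, or that $\Delta$ should be refined so that $\mathscr P$ becomes a subcomplex, is a genuine improvement over the paper's treatment.
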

\begin{proof}
(1) (2) and (3) have been proved in \cite[Lemma 3.4 (ii), (iii), (iv) and (v)]{MuBookmaking}. Let next prove (4). First of all, $p=\bigoplus_{v\in\Delta\cap\mathscr{P}}\hat{h}_v$ is  a McNaughton function by definition. Further, for every vertex $v\in \Delta\cap\mathscr{P}$, $p(v)=1$ by definition of normalized Schauder hat. If $x\in \mathscr{P}\setminus V(\Delta)$, let $\Sigma$ be a simplex of $\Delta$ which contains $x$. The claim follows, since  each $\hat{h}_j$ is linear on $\Sigma$.
\end{proof}

\section{A geometric characterization of strict coherence.}\label{sec:geoStrict}
By Proposition \ref{prop1}, if $\Phi=\{f_1,\ldots, f_k\}$ is a finite subset of $\free_n$ and $\beta$ a book on $\Phi$, we can rephrase the definitions of coherence and strict coherence for $\beta$ as follows: 
\begin{enumerate}
\item $\beta$ is coherent if for every $\sigma\in \mathbb{R}^k$, there exists $x\in [0,1]^n$ such that $\sigma \cdot \langle f_1(x),\ldots, f_k(x) \rangle\geq 0$.
\item $\beta$ is {\em strictly coherent} if for every $\sigma\in \mathbb{R}^k$, the existence of $x\in [0,1]^n$ such that $\sigma \cdot \langle f_1(x),\ldots, f_k(x) \rangle<0$, implies the existence of another $x'\in [0,1]^n$ such that 
$\sigma \cdot \langle f_1(x'),\ldots, f_k(x') \rangle>0$.
\end{enumerate}
Notice that a book $\beta$ is coherent and not strictly coherent iff for any vector $\sigma\in \mathbb{R}^k$, one has that for all $x\in [0,1]^n$, $\sigma \cdot \langle f_1(x),\ldots, f_k(x) \rangle\leq0$ and for some $x'\in [0,1]^n$, $\sigma \cdot \langle f_1(x'),\ldots, f_k(x') \rangle=0$.

Throughout we will adopt the following notation:
$$
\mathscr{D}_\Phi=\{\beta:\Phi\to[0,1]\mid \beta \mbox{ is coherent}\}.
$$

For any  $X\subseteq[0,1]^n$, $\mathscr{C}_\Phi(X)$ will denote the topological closure of the convex hull of all points of $\mathbb{R}^k$ of the form $\langle f_1(x),\ldots, f_k(x)\rangle$ for $\varphi_i\in \Phi$ and $x\in X$. In symbols,
$$
\mathscr{C}_\Phi(X)=\cl\;\conv\{\langle f_1(x),\ldots, f_k(x)\rangle\mid \varphi_i\in \Phi, x\in X\}.
$$
Whenever $X$ is finite, $\mathscr{C}_\Phi(X)=\conv\{\langle f_1(x),\ldots, f_k(x)\rangle\mid \varphi_i\in \Phi, x\in X\}$, which  is a convex polytope. For the sake of readability, we will  write $\mathscr{C}_\Phi$ instead of $\mathscr{C}_\Phi([0,1]^n)$.

For every finite $\Phi$, Mundici's extension of de Finetti's theorem to \luk\ logic (see \cite[Theorem 2.1]{MuBookmaking}) shows that $\mathscr{D}_\Phi=\mathscr{C}_\Phi$. 

\begin{lem}{\cite[Corollary 5.4]{MuBookmaking}}\label{lemma1}
For any book $\beta:\Phi\to[0,1]$ the following conditions are equivalent:
\begin{enumerate}
\item $\beta$ is coherent;
\item There exists a finite $X\subset[0,1]^n$ such that, for each $f_i\in \Phi$,  $\beta(f_i)\in \mathscr{C}_\Phi(X)$;
\item There exists a finite $X\subset[0,1]^n$ with $|X|\leq n+1$ such that, for each $f_i\in \Phi$,  $\beta(f_i)\in \mathscr{C}_\Phi(X)$;
\item For each regular complex $\Delta$ of $[0,1]^n$ linearizing  $\Phi$, $\beta(f_i)\in \mathscr{C}_\Phi(V(\Delta))$.
\end{enumerate}
\end{lem}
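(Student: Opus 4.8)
The plan is to prove the cycle of implications $(1)\Rightarrow(4)\Rightarrow(3)\Rightarrow(2)\Rightarrow(1)$, since $(4)$ is the strongest-looking statement and $(3)\Rightarrow(2)$ is trivial. For $(2)\Rightarrow(1)$, suppose $X\subset[0,1]^n$ is finite with $\langle\beta(f_1),\dots,\beta(f_k)\rangle\in\mathscr{C}_\Phi(X)$. Since $X\subseteq[0,1]^n$, we have $\mathscr{C}_\Phi(X)\subseteq\mathscr{C}_\Phi([0,1]^n)=\mathscr{C}_\Phi=\mathscr{D}_\Phi$ by Mundici's theorem, so $\beta$ is coherent. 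This direction is immediate once the containment $\mathscr{C}_\Phi(X)\subseteq\mathscr{C}_\Phi$ is noted (monotonicity of closed convex hull in the argument set).

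For $(1)\Rightarrow(4)$, fix a regular complex $\Delta$ linearizing $\Phi$ with vertices $v_1,\dots,v_t$, and set $\beta_\Phi=\langle\beta(f_1),\dots,\beta(f_k)\rangle$. Since $\beta$ is coherent, $\beta_\Phi\in\mathscr{C}_\Phi=\cl\,\conv\{\langle f_1(x),\dots,f_k(x)\rangle : x\in[0,1]^n\}$. I want to show $\beta_\Phi\in\mathscr{C}_\Phi(V(\Delta))=\conv\{\langle f_1(v_j),\dots,f_k(v_j)\rangle : j=1,\dots,t\}$ (a polytope, hence already closed). The key geometric fact is that because each $f_i$ is linear on every simplex of $\Delta$, the map $x\mapsto\langle f_1(x),\dots,f_k(x)\rangle$ sends each simplex of $\Delta$ affinely onto the polytope spanned by the images of its vertices; hence the image of all of $[0,1]^n$ under this map is contained in $\conv\{\langle f_1(v_j),\dots,f_k(v_j)\rangle : j\}$. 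Taking convex hull and closure of a set already inside this polytope keeps us inside it, giving $\mathscr{C}_\Phi\subseteq\mathscr{C}_\Phi(V(\Delta))$, so $\beta_\Phi\in\mathscr{C}_\Phi(V(\Delta))$. Concretely one can invoke Lemma \ref{lemmaSchauder}(3): writing $x$ inside a simplex with vertices among the $v_j$, the Schauder hats $\hat h_j(x)$ furnish the convex coefficients expressing $\langle f_1(x),\dots,f_k(x)\rangle$ as $\sum_j \hat h_j(x)\langle f_1(v_j),\dots,f_k(v_j)\rangle$, using parts (1) and (2) to see these coefficients sum to $1$ and only finitely many (those of the containing simplex) are nonzero.

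For $(4)\Rightarrow(3)$, given that $\beta_\Phi\in\mathscr{C}_\Phi(V(\Delta))=\conv\{\langle f_1(v_j),\dots,f_k(v_j)\rangle : j=1,\dots,t\}$, which is a polytope in $\mathbb{R}^k$ but we want the cube dimension $n$ to govern the Carathéodory bound: note $\beta_\Phi$ lies in the convex hull of the finite set $S=\{\langle f_1(v_j),\dots,f_k(v_j)\rangle : v_j\in V(\Delta)\}$, and $S$ is the image under the single affine-on-simplices map of the finite vertex set. The trick (this is the step I expect to be the main obstacle, matching the $|X|\le n+1$ bound) is to move the Carathéodory argument to $[0,1]^n$ rather than $\mathbb{R}^k$: since $\beta$ is coherent, by Mundici's characterization the point $\beta_\Phi$ is a convex combination of points $\langle f_1(x),\dots,f_k(x)\rangle$ with $x\in[0,1]^n$, and one shows such $x$ can be taken to be at most $n+1$ in number. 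One way is to work with the homogeneous/McNaughton-function picture: the set $\{(x,\langle f_1(x),\dots,f_k(x)\rangle) : x\in[0,1]^n\}$ lives in $[0,1]^n\times\mathbb{R}^k$, and requiring $\beta_\Phi$ to be a convex combination of the second coordinates while the first coordinates range over a fixed simplex of $\Delta$ forces, by Carathéodory applied in the $n$-dimensional simplex (which has $n+1$ vertices), that $n+1$ points suffice; alternatively one selects a single maximal simplex $T$ of $\Delta$ with $\beta_\Phi\in\conv\{\langle f_1(v) ,\dots,f_k(v)\rangle : v\in V(T)\}$ — such $T$ exists because $[0,1]^n=\bigcup_{T}T$ and the images of the maximal simplices cover $\mathscr{C}_\Phi$ — and then $X=V(T)$ has $|X|\le n+1$ and $\beta(f_i)\in\mathscr{C}_\Phi(X)$ for each $i$. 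Finally $(3)\Rightarrow(2)$ is trivial since a set witnessing $(3)$ already witnesses $(2)$. The delicate point throughout is keeping the dimension count anchored to $n$ (the number of variables / simplex dimension) rather than $k$ (the number of formulas), which is exactly where one must use that $\Delta$ triangulates the $n$-cube and that each maximal simplex has $n+1$ vertices.
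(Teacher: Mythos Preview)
The paper does not prove this lemma; it is quoted from \cite[Corollary 5.4]{MuBookmaking} without argument, so there is no in-paper proof to compare against. Your implications $(2)\Rightarrow(1)$, $(1)\Rightarrow(4)$ and $(3)\Rightarrow(2)$ are correct, and your argument for $(1)\Rightarrow(4)$ via the Schauder-hat convex coefficients is the right one.

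Your argument for $(4)\Rightarrow(3)$, however, has a genuine gap. You claim that one can find a single maximal simplex $T$ of $\Delta$ with $\beta_\Phi\in\conv\{\langle f_1(v),\dots,f_k(v)\rangle : v\in V(T)\}$ because ``the images of the maximal simplices cover $\mathscr{C}_\Phi$''. This is false: those images cover only the \emph{image} of the map $x\mapsto\langle f_1(x),\dots,f_k(x)\rangle$, not its convex hull $\mathscr{C}_\Phi$, which is in general strictly larger. Concretely, take $n=1$, $k=4$, and let $f_1,f_2,f_3,f_4\in\free_1$ be the normalized Schauder hats at $1/4,\,1/2,\,3/4,\,1$ over the complex with vertex set $\{0,1/4,1/2,3/4,1\}$. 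The image of $[0,1]$ is the piecewise-linear path through $0,e_1,e_2,e_3,e_4\in\mathbb{R}^4$, whose convex hull $\mathscr{C}_\Phi$ is the full $4$-simplex. The coherent book $\beta=(1/5,1/5,1/5,1/5)$ lies in the interior of this simplex, yet any convex combination of two points on the path lies in the convex hull of at most four of the five vertices, hence on a proper face. Thus no $X\subset[0,1]$ with $|X|\le 2=n+1$ realizes $\beta$.

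This counterexample shows more: condition~(3) with the bound $|X|\le n+1$, as printed, cannot hold in general --- it is almost certainly a transcription slip for $|X|\le k+1$, which follows from (4) immediately by Carath\'eodory's theorem in $\mathbb{R}^k$. Your closing remark that the ``delicate point'' is anchoring the dimension count to $n$ rather than $k$ was therefore on target: that anchoring is in fact impossible, and no argument you supply for it can succeed.
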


The next corollary is an immediate consequence of Lemma \ref{lemma1}.

\begin{cor}\label{corollary1}
Let $\Phi$ be a finite set of \luk\ formulas. Then, for every regular complex $\Delta$ which linearizes $\Phi$,
$$
\mathscr{D}_\Phi=\mathscr{C}_\Phi(V(\Delta)).
$$
\end{cor}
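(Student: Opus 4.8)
The plan is to obtain the identity directly from Lemma \ref{lemma1}, throughout identifying a book $\beta\colon\Phi\to[0,1]$ with the point $\langle\beta(f_1),\dots,\beta(f_k)\rangle\in\mathbb{R}^k$, as the surrounding discussion does. Concretely, I would fix an arbitrary regular complex $\Delta$ of $[0,1]^n$ that linearizes $\Phi$ and establish the two inclusions separately. A preliminary remark worth recording is that, since $V(\Delta)$ is finite, $\mathscr{C}_\Phi(V(\Delta))$ is already a polytope: the closure in the definition of $\mathscr{C}_\Phi(\cdot)$ is vacuous here, so $\mathscr{C}_\Phi(V(\Delta))=\conv\{\langle f_1(x),\dots,f_k(x)\rangle\mid x\in V(\Delta)\}$.

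For the inclusion $\mathscr{D}_\Phi\subseteq\mathscr{C}_\Phi(V(\Delta))$ I would take a coherent book $\beta$ and apply the implication $(1)\Rightarrow(4)$ of Lemma \ref{lemma1}. Clause $(4)$ asserts membership in $\mathscr{C}_\Phi(V(\Delta'))$ for \emph{every} regular complex $\Delta'$ linearizing $\Phi$; instantiating it at our fixed $\Delta$ gives $\beta\in\mathscr{C}_\Phi(V(\Delta))$. For the reverse inclusion $\mathscr{C}_\Phi(V(\Delta))\subseteq\mathscr{D}_\Phi$ I would take $\beta\in\mathscr{C}_\Phi(V(\Delta))$ and observe that $X=V(\Delta)$ is a finite subset of $[0,1]^n$ for which $\beta(f_i)\in\mathscr{C}_\Phi(X)$ holds for all $i$; hence clause $(2)$ of Lemma \ref{lemma1} is satisfied, and $(2)\Rightarrow(1)$ yields that $\beta$ is coherent, i.e.\ $\beta\in\mathscr{D}_\Phi$. (Equivalently, one may appeal to Mundici's theorem $\mathscr{D}_\Phi=\mathscr{C}_\Phi=\mathscr{C}_\Phi([0,1]^n)$ together with the evident monotonicity $X\subseteq Y\Rightarrow\mathscr{C}_\Phi(X)\subseteq\mathscr{C}_\Phi(Y)$ applied to $V(\Delta)\subseteq[0,1]^n$.)

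I do not expect a genuine obstacle: the entire mathematical content has been front-loaded into Lemma \ref{lemma1} and, behind it, into Mundici's Dutch-Book theorem. The only points requiring a moment's care are the notational identification of books with vectors of $\mathbb{R}^k$ and the observation that clause $(4)$ of Lemma \ref{lemma1} is universally quantified over linearizing complexes, so that it may legitimately be specialised to the particular $\Delta$ appearing in the statement.
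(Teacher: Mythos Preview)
Your proposal is correct and matches the paper's approach: the paper simply records the corollary as ``an immediate consequence of Lemma \ref{lemma1}'' without writing out a separate proof, and your argument is exactly the natural unpacking of that remark via the implications $(1)\Rightarrow(4)$ and $(2)\Rightarrow(1)$.
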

Thus, the coherence of a  book $\beta$ on $\Phi$ does not depend on the particular regular complex $\Delta$ chosen to linearize $\Phi$. Moreover, since for each $\Delta$, $V(\Delta)$ is finite,  $\mathscr{C}_\Phi(V(\Delta))$ is a polytope coinciding with $\mathscr{D}_\Phi$. Therefore, 
by the Krein-Milman theorem \cite[Theorem 1.2]{Ewald}, $\mathscr{C}_\Phi(V(\Delta))$ is the convex hull of the set of extremal points of $\mathscr{D}_\Phi$, i.e., for every 
$\Delta$, 
$$
\mathscr{C}_\Phi(V(\Delta))=\conv\; \ext\; \mathscr{D}_\Phi.
$$
The following example clarifies the claim made in Corollary \ref{corollary1}.

\begin{example}\label{rem:dependance} 
Let $\Phi=\{x, y, x\oplus y\}$ together with the regular complexes $\Delta_1$ and $\Delta_2$ of Example \ref{example-triang1}. 

Each of the five vertices $v^1_i$ of $\Delta_1$  determines a point 
$$
p_i=\langle f_1(v^1_i), f_2(v^1_i), f_3(v^1_i)\rangle\in \mathbb{R}^3
$$ 
(where  $f_1(x,y)=x, f_2(x,y)=y$ and $f_3(x, y)=x\oplus y$) and 
$$
\mathscr{D}_\Phi=\mathscr{C}_\Phi(\{v^1_1,\ldots, v^1_5\})=\conv(\{p_1,\ldots, p_5\}).
$$ 
In particular: $p_1=\langle 0,0,0\rangle$, $p_2=\langle 0,1,1\rangle$, $p_3=\langle 1,0,1\rangle$, $p_4=\langle 1,1,1\rangle$ and $p_5=\langle 1/2, 1/2, 1\rangle$. 

Similarly, for $\Delta_2$,
$$
\mathscr{D}_\Phi=\mathscr{C}_\Phi(\{v^2_1,\ldots, v^2_8\})=\conv(\{q_1,\ldots, q_8\}),
$$
where:  $q_1=p_1=\langle 0,0,0\rangle$, $q_2=p_2=\langle 0,1,1\rangle$, $q_3=p_3=\langle 1,0,1\rangle$, $q_4=p_4=\langle 1,1,1\rangle$, $q_5=p_5=\langle 1/2, 1/2, 1\rangle$, $q_6=\langle 1/3, 1/3, 2/3\rangle$, $q_7=\langle 1/2,0,1/2\rangle$ and $q_8=\langle 0, 1/2, 1/2\rangle$.

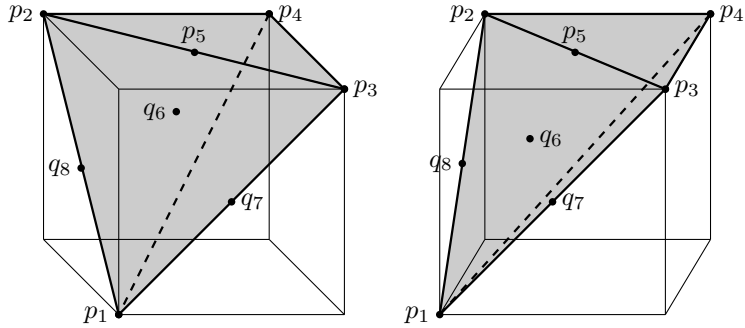
\begin{figure}[h!]
\begin{center}
   \begin{tikzpicture}[scale=3] 
 \coordinate[label=left: {$p_1$}] (A) at (0,0) ; 
  \coordinate (F) at (-1/3,1/3) ; 
   \coordinate(B) at (1,0);
      \coordinate(H) at (2/3,1/3);
       \coordinate[label=right: {$p_3$}]  (C) at (1,1);
              \coordinate[label=right: {$p_4$}]  (I) at (2/3,4/3);

   \coordinate(D) at (0,1) ; 
      \coordinate[label=left: {$p_2$}] (G) at (-1/3,4/3) ; 
         \coordinate[label=above: {$p_5$}] (L) at (.336,1.163) ; 
\node [fill=black,circle,inner sep=1pt,label=-45:{}]  at (0,0){};
\node [fill=black,circle,inner sep=1pt,label=-45:{}]  at (1,1){};
\node [fill=black,circle,inner sep=1pt,label=-45:{}]  at (-1/3,4/3){};
\node [fill=black,circle,inner sep=1pt,label=-45:{}]  at (2/3,4/3){};
\node [fill=black,circle,inner sep=1pt,label=-45:{}]  at (.336,1.163){};

   \filldraw[fill= black!20!white, draw=black]
  (-1/3,4/3)--(2/3,4/3)--(1,1);
    \filldraw[fill= black!20!white, draw=black] 
   (0,0)--(1,1)--(-1/3,4/3);
  
\node [fill=black,circle,inner sep=1pt,label=-45:{}]  at (0,0){};
\node [fill=black,circle,inner sep=1pt,label=-45:{}]  at (1,1){};
\node [fill=black,circle,inner sep=1pt,label=-45:{}]  at (2/3,4/3){};
\node [fill=black,circle,inner sep=1pt,label=-45:{}]  at (.336,1.163){};
         \coordinate[label=above: {$p_5$}] (L) at (.336,1.163) ; 
\draw[line width=0.3mm] (C)--(A);
\draw[line width=0.3mm,dashed](I)--(A);
\draw[line width=0.3mm] (G)--(A);
\draw[line width=0.3mm] (C)--(G);
\draw (A)--(F);
\draw (F)--(G);
\draw [line width=0.3mm] (G)--(I);
\draw[line width=0.3mm]  (I)--(C);
\draw (G)--(D);
\draw (B)--(H);
\draw (I)--(H);
\draw (H)--(F);
  \draw  (A) -- (B); 
  \draw  (A) -- (D);
  \draw  (B) -- (C);
  \draw  (D) -- (C);
   \coordinate[label=left: {$q_6$}] (K)  at (0.255,0.9);
   \node [fill=black,circle,inner sep=1pt,label=-45:{}]  at (0.255,0.9){};
      \coordinate[label=right: {$q_7$}] (Z)  at (0.5,0.5);
   \node [fill=black,circle,inner sep=1pt,label=-45:{}]  at (0.5,0.5){};
         \coordinate[label=left: {$q_8$}] (T)  at (-1/6,0.65);
   \node [fill=black,circle,inner sep=1pt,label=-45:{}]  at (-1/6,0.65){};
  \end{tikzpicture}
     \begin{tikzpicture}[scale=3] 
 \coordinate[label=left: {$p_1$}] (A) at (0,0) ; 
  \coordinate (F) at (0.2,1/3) ; 
   \coordinate(B) at (1,0);
      \coordinate(H) at (1.2,1/3);
       \coordinate[label=right: {$p_3$}]  (C) at (1,1);
              \coordinate[label=right: {$p_4$}]  (I) at (1.2,4/3);
   \coordinate(D) at (0,1) ; 
      \coordinate[label=left: {$p_2$}] (G) at (0.2,4/3) ;

\node [fill=black,circle,inner sep=1pt,label=-45:{}]  at (0,0){};
\node [fill=black,circle,inner sep=1pt,label=-45:{}]  at (1,1){};

\node [fill=black,circle,inner sep=1pt,label=-45:{}]  at (1.2,4/3){};
\node [fill=black,circle,inner sep=1pt,label=-45:{}]  at (.6,1.163){};

   \filldraw[fill= black!20!white, draw=black]
  (0.2,4/3)--(1.2,4/3)--(1,1);
    \filldraw[fill= black!20!white, draw=black] 
   (0,0)--(1,1)--(0.2,4/3);
  
  \node [fill=black,circle,inner sep=1pt,label=-45:{}]  at (1,1){};
\node [fill=black,circle,inner sep=1pt,label=-45:{}]  at (.6,1.163){};
        \node [fill=black,circle,inner sep=1pt,label=-45:{}]  at (0.2,4/3){};
                 \coordinate[label=above: {$p_5$}] (L) at (.6,1.163) ; 
                 \node [fill=black,circle,inner sep=1pt,label=-45:{}]  at (1.2,4/3){};
\draw[line width=0.3mm] (C)--(A);
\draw[line width=0.3mm,dashed](I)--(A);
\draw[line width=0.3mm] (G)--(A);
\draw[line width=0.3mm] (C)--(G);
\draw (A)--(F);
\draw (F)--(G);
\draw [line width=0.3mm] (G)--(I);
\draw[line width=0.3mm]  (I)--(C);
\draw (G)--(D);
\draw (B)--(H);
\draw (I)--(H);
\draw (H)--(F);
  \draw  (A) -- (B); 

  \draw  (A) -- (D);
  \draw  (B) -- (C);
  \draw  (D) -- (C);
    \coordinate[label=right: {$q_6$}] (K)  at (0.4,0.78);
   \node [fill=black,circle,inner sep=1pt,label=-45:{}]  at (0.4,0.78){};
    \coordinate[label=right: {$q_7$}] (Z)  at (0.5,0.5);
   \node [fill=black,circle,inner sep=1pt,label=-45:{}]  at (0.5,0.5){};
       \coordinate[label=left: {$q_8$}] (T)  at (0.1,0.67);
   \node [fill=black,circle,inner sep=1pt,label=-45:{}]  at (0.1,0.67){};
  \end{tikzpicture}
  \end{center}
  \caption{{\small The convex polytope $\mathscr{D}_\Phi$ (in two perspectives) for $\Phi=\{x, y, x\oplus y\}$, and its extremal points $p_1, p_2, p_3$ and $p_4$.}}\label{figure:triangulation2}
\end{figure}

Since both $\Delta_1$ and $\Delta_2$ linearize $\Phi$, 
$$
\mathscr{D}_\Phi=\mathscr{C}_\Phi(V(\Delta_1))=\mathscr{C}_\Phi(V(\Delta_2))
$$ 
(see Figure \ref{figure:triangulation2}) and 
$$
\ext\;\mathscr{C}_\Phi(V(\Delta_1))=\ext\;\mathscr{C}_\Phi(V(\Delta_2))=\ext\;\mathscr{D}_\Phi=\{p_1, p_2, p_3, p_4\}.
$$
\hfill$\Box$
\end{example}

Let us write:
$$
\mathscr{K}_\Phi=\{\beta:\Phi\to[0,1]\mid \beta \mbox{ is strictly coherent}\}.
$$

The following theorem, which is the main result of this section,  provides us with a geometric characterization of strict coherence for books on formulas of \luk\ logic. 
\begin{thm}\label{thm:MainGeo}
Let $\Phi$ be a finite subset of $\free_n$. Then
$$
\mathscr{K}_\Phi=\relint\;\mathscr{D}_\Phi.
$$
\end{thm}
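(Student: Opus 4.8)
The plan is to leverage two facts already in hand: Mundici's identity $\mathscr{D}_\Phi=\mathscr{C}_\Phi$ and Corollary \ref{corollary1}, which together exhibit $\mathscr{D}_\Phi$ as a convex polytope equal to $\conv\;\ext\;\mathscr{D}_\Phi$, and then to translate the betting reformulations of coherence and strict coherence from the beginning of Section \ref{sec:geoStrict} into the language of supporting hyperplanes via Lemma \ref{lemma:conv1}. Throughout I would write $F(x)=\langle f_1(x),\ldots,f_k(x)\rangle$ for $x\in[0,1]^n$, so that $\mathscr{D}_\Phi=\conv\{F(x)\mid x\in[0,1]^n\}$ (the topological closure in the definition of $\mathscr{C}_\Phi$ is redundant, $F$ being continuous and $[0,1]^n$ compact), and the bookmaker's balance for a book $\beta\in\mathscr{D}_\Phi$, stakes $\sigma$ and valuation $x$ is $\sigma\cdot(\beta-F(x))$. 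A first, routine observation is that $\mathscr{K}_\Phi\subseteq\mathscr{D}_\Phi$: a non-coherent $\beta$ admits $\sigma$ with $\sigma\cdot(\beta-F(x))<0$ for every $x$, so no valuation can reverse the sign and $\beta$ is not strictly coherent. Hence it suffices to prove the two inclusions between $\mathscr{K}_\Phi$ and $\relint\;\mathscr{D}_\Phi$ inside the polytope $\mathscr{D}_\Phi$.

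For $\relint\;\mathscr{D}_\Phi\subseteq\mathscr{K}_\Phi$, I would fix $\beta\in\relint\;\mathscr{D}_\Phi$ and $\sigma\in\mathbb{R}^k$ with $\sigma\cdot(\beta-F(x))<0$, i.e.\ $\sigma\cdot F(x)>\sigma\cdot\beta$, for some $x$, and produce $x'$ with $\sigma\cdot F(x')<\sigma\cdot\beta$. The key sub-step is that there is a point $\gamma\in\mathscr{D}_\Phi$ with $\sigma\cdot\gamma<\sigma\cdot\beta$: otherwise $\sigma\cdot\gamma\geq\sigma\cdot\beta$ on all of $\mathscr{D}_\Phi$, and representing $\beta=\sum_{e\in\ext\;\mathscr{D}_\Phi}\lambda(e)\,e$ with all $\lambda(e)>0$ via Lemma \ref{lemma:conv1}(4), together with $\sigma\cdot e\geq\sigma\cdot\beta$ for each extreme point $e$, would force $\sigma\cdot e=\sigma\cdot\beta$ for every $e$, hence $\sigma$ constant on $\conv\;\ext\;\mathscr{D}_\Phi=\mathscr{D}_\Phi$, contradicting $\sigma\cdot F(x)>\sigma\cdot\beta$. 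Once such a $\gamma$ is found, expanding it as a finite convex combination $\gamma=\sum_j\mu_j F(x_j)$ (by Corollary \ref{corollary1}, or Lemma \ref{lemma1}) and averaging shows that at least one $x_j$ satisfies $\sigma\cdot F(x_j)<\sigma\cdot\beta$, i.e.\ $\sigma\cdot(\beta-F(x_j))>0$; that $x_j$ is the desired $x'$.

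For $\mathscr{K}_\Phi\subseteq\relint\;\mathscr{D}_\Phi$, I would argue by contraposition: if $\beta\in\mathscr{D}_\Phi$ but $\beta\in\relb\;\mathscr{D}_\Phi$, I want stakes $\sigma$ on which strict coherence fails. Applying the supporting hyperplane theorem (Lemma \ref{lemma:conv1}(1)) relative to the affine hull of $\mathscr{D}_\Phi$ — where $\mathscr{D}_\Phi$ is full-dimensional and $\beta$ is a genuine boundary point — I would choose $\sigma$ with $\sigma\cdot\beta\leq\sigma\cdot\gamma$ for all $\gamma\in\mathscr{D}_\Phi$ and with the supporting hyperplane \emph{proper}, i.e.\ $\sigma\cdot\gamma_0>\sigma\cdot\beta$ for some $\gamma_0\in\mathscr{D}_\Phi$. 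Expanding $\gamma_0$ as a finite convex combination of points $F(x_j)$ yields some $x$ with $\sigma\cdot(\beta-F(x))<0$; but $\sigma\cdot(\beta-F(x'))\leq 0$ for every $x'$, since $F(x')\in\mathscr{D}_\Phi$. So no valuation reverses the sign and $\beta$ is not strictly coherent, as required; the two inclusions then give $\mathscr{K}_\Phi=\relint\;\mathscr{D}_\Phi$.

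The main obstacle I anticipate is the properness of the supporting hyperplane in the last step: Lemma \ref{lemma:conv1}(1) as stated only yields a supporting hyperplane, not one that strictly separates $\beta$ from part of $\mathscr{D}_\Phi$, so I would need to supplement it with the observation that, because $\beta\notin\relint\;\mathscr{D}_\Phi$, the separating functional can be taken non-constant on $\mathscr{D}_\Phi$ — the natural way to see this is again to work inside the affine hull of $\mathscr{D}_\Phi$, exactly as Lemma \ref{lemma:conv1}(4) and the proof of Lemma \ref{lemma:conv1}(2) already do. This is also precisely what makes the argument robust to the possibility that $\mathscr{D}_\Phi$ is not full-dimensional in $\mathbb{R}^k$ (for instance when some $f_i$ is functionally dependent on the others). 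Everything else is bookkeeping with finite convex combinations and with the sign of the linear form $\gamma\mapsto\sigma\cdot(\beta-\gamma)$.
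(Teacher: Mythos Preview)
Your proposal is correct and follows the same geometric strategy as the paper --- supporting hyperplanes for $\mathscr{K}_\Phi\subseteq\relint\;\mathscr{D}_\Phi$ and the extremal-point structure of the polytope for the reverse inclusion --- but your handling of $\relint\;\mathscr{D}_\Phi\subseteq\mathscr{K}_\Phi$ is genuinely tighter. The paper's written argument for that direction fixes one particular $\sigma$ supplied by Lemma~\ref{lemma:conv1}(2) and exhibits $x,x'$ on either side of the corresponding hyperplane; as written this does not address the universal quantification over stakes required by the definition of strict coherence. Your route --- take an arbitrary $\sigma$ witnessing a loss, use Lemma~\ref{lemma:conv1}(4) (the strictly-positive barycentric representation of $\beta$) to force $\sigma$ non-constant on $\mathscr{D}_\Phi$, and then pull back through a finite convex combination to a valuation $x'$ giving a gain --- is exactly the argument needed to close that gap, and it is what Lemma~\ref{lemma:conv1}(2)--(3) would have to be upgraded to in order to make the paper's proof go through for every $\sigma$. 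Your caution about properness in the other direction is likewise warranted: Lemma~\ref{lemma:conv1}(1) as stated does not exclude $\sigma$ constant on $\mathscr{D}_\Phi$, and the paper silently assumes this; your remedy (invoke the supporting hyperplane theorem relative to the affine hull, where $\beta\in\relb\;\mathscr{D}_\Phi$ guarantees a proper support) is the standard and correct completion.
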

\begin{proof}
Since $\mathscr{D}_\Phi=\mathscr{C}_\Phi$, we will prove the equivalent claim: $\mathscr{K}_\Phi=\relint\;\mathscr{C}_\Phi$.

Trivially, $\mathscr{K}_\Phi\subseteq \mathscr{C}_\Phi$. Let us show that $\mathscr{K}_\Phi\subseteq \relint\;\mathscr{C}_\Phi$. Since $\mathscr{C}_\Phi$ is closed, 
\begin{center}
$\mathscr{C}_\Phi=\relint\; \mathscr{C}_\Phi\cup \relb\; \mathscr{C}_\Phi$ and $\relint\; \mathscr{C}_\Phi\cap \relb\; \mathscr{C}_\Phi=\emptyset$.
\end{center}
Assume (absurdum hypothesis) that $\beta\in \mathscr{K}_\Phi\cap \relb\; \mathscr{C}_\Phi$. By Lemma \ref{lemma:conv1} (1)  
there exists $\sigma\in \mathbb{R}^k$ such that for all $\gamma\in \mathscr{C}_\Phi$, 
$$
\sigma \cdot \beta\leq \sigma\cdot \gamma.
$$ 
Thus, for all $x\in [0,1]^n$, 
$$
\sigma \cdot \beta\leq \sigma\cdot \langle f_1(x),\ldots, f_k(x)\rangle\leq 0.
$$
Therefore, $\beta$ is coherent but not strictly coherent. This contradicts our hypothesis. Thus,  $\mathscr{K}_\Phi\subseteq \relint\;\mathscr{C}_\Phi$.

In order to prove the converse inclusion, assume that $\beta\in \relint\; \mathscr{C}_\Phi$ and let 
$\sigma\in \mathbb{R}^k$ satisfy  
Lemma \ref{lemma:conv1} (2). Then
$$
(\mathscr{C}_\Phi)_\sigma^+=\{\gamma\in \mathscr{C}_\Phi\mid \gamma\cdot \sigma < \beta\cdot \sigma\}
$$
and
$$
(\mathscr{C}_\Phi)_\sigma^-=\{\gamma\in \mathscr{C}_\Phi\mid \gamma\cdot \sigma > \beta\cdot \sigma\},
$$
are nonempty.

Moreover, by Lemma \ref{lemma:conv1} (3), both $(\mathscr{C}_\Phi)_\sigma^+$ and $(\mathscr{C}_\Phi)_\sigma^-$  contain  an extremal point of $\mathscr{C}_\Phi$. Therefore, there are $x, x'\in [0,1]^n$ such that $\langle f_1(x),\ldots, f_k(x)\rangle \cdot \sigma < \beta\cdot \sigma$ and $\langle f_1(x'),\ldots, f_k(x')\rangle \cdot \sigma > \beta\cdot \sigma$, that is, $\beta$ is strictly coherent.
\end{proof}
\begin{cor}\label{cor:decidable}
Let $\Phi$ and $\beta$ be as above. Then the following conditions are equivalent:
\begin{enumerate}
\item $\beta$ is strictly coherent;
\item For each regular complex $\Delta$ which linearizes $\Phi$, there is  $\lambda:V(\Delta)\to[0,1]$ such that $\sum_{v_i\in V(\Delta)}\lambda(v_i)=1$, for all $v_i\in V(\Delta)$, $\lambda(v_i)>0$ and $\beta(f_j)=\sum_{v_i\in V(\Delta)}\lambda(v_i)\cdot f_j(v_i)$;
\item There exists a map $\lambda:\ext\; \mathscr{D}_\Phi\to[0,1]$ such that $\sum_{e\in \ext\; \mathscr{D}_\Phi}\lambda(e)=1$, for all $e\in \ext\; \mathscr{D}_\Phi$, $\lambda(e)>0$ and $\beta(\varphi_i)=\sum_{e\in \ext\; \mathscr{D}_\Phi}\lambda(e)\cdot f_i(e)$.
\end{enumerate}
Further, the set $\mathscr{K}_\Phi^\mathbb{Q}$ of rational-valued strictly coherent books on $\Phi$ is decidable.
\end{cor}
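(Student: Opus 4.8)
The plan is to derive (1)$\Leftrightarrow$(2)$\Leftrightarrow$(3) by combining Theorem \ref{thm:MainGeo} with the convex-geometric description of the relative interior given in Lemma \ref{lemma:conv1} (4), and then to extract decidability from the rationality of the data. For (1)$\Leftrightarrow$(3): by Theorem \ref{thm:MainGeo}, $\beta$ is strictly coherent iff (identifying $\beta$ with the vector $\langle\beta(f_1),\dots,\beta(f_k)\rangle$) the point $\beta$ lies in $\relint\;\mathscr{D}_\Phi$. Since $\mathscr{D}_\Phi$ is a polytope (Corollary \ref{corollary1}), Lemma \ref{lemma:conv1} (4) says exactly that $\beta\in\relint\;\mathscr{D}_\Phi$ iff there is a map $\lambda:\ext\;\mathscr{D}_\Phi\to[0,1]$ with $\sum_{e}\lambda(e)=1$, all $\lambda(e)>0$, and $\beta=\sum_{e}\lambda(e)\cdot e$; reading this coordinatewise and recalling that each extremal point $e$ of $\mathscr{D}_\Phi=\mathscr{C}_\Phi(V(\Delta))$ is of the form $e=\langle f_1(x),\dots,f_k(x)\rangle$ for a suitable $x\in[0,1]^n$, this is precisely condition (3) (with the harmless abuse of writing $f_i(e)$ for the $i$-th coordinate of $e$).

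For (1)$\Leftrightarrow$(2): fix a regular complex $\Delta$ linearizing $\Phi$. By Corollary \ref{corollary1}, $\mathscr{D}_\Phi=\mathscr{C}_\Phi(V(\Delta))=\conv\{\langle f_1(v),\dots,f_k(v)\rangle \mid v\in V(\Delta)\}$. If (2) holds for $\Delta$, then $\beta$ is a convex combination, with \emph{strictly positive} weights on every point $\langle f_1(v),\dots,f_k(v)\rangle$, $v\in V(\Delta)$; since $\ext\;\mathscr{D}_\Phi$ is contained in $\{\langle f_1(v),\dots,f_k(v)\rangle\mid v\in V(\Delta)\}$, collecting weights shows $\beta$ is a strictly positive convex combination of $\ext\;\mathscr{D}_\Phi$, hence $\beta\in\relint\;\mathscr{D}_\Phi$ by Lemma \ref{lemma:conv1} (4), i.e.\ (1) holds. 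Conversely, if $\beta\in\relint\;\mathscr{D}_\Phi$, then by Lemma \ref{lemma:conv1} (4) it is a strictly positive convex combination $\sum_{e\in\ext\;\mathscr{D}_\Phi}\mu(e)\cdot e$; each $e$ is $\langle f_1(v),\dots,f_k(v)\rangle$ for some $v\in V(\Delta)$, so we get \emph{some} representation of $\beta$ as a convex combination of the finitely many points $\langle f_1(v),\dots,f_k(v)\rangle$, $v\in V(\Delta)$, but possibly with zero weight on some vertices. To upgrade to strictly positive weights on all of $V(\Delta)$, take the barycentric book $\beta_0=\frac{1}{|V(\Delta)|}\sum_{v\in V(\Delta)}\langle f_1(v),\dots,f_k(v)\rangle$, which visibly has a strictly positive representation, and set $\beta_\varepsilon=(1-\varepsilon)\beta+\varepsilon\beta_0$; for small $\varepsilon>0$ this is still in $\relint\;\mathscr{D}_\Phi$ and equals $\beta$ only if... — more simply, observe directly that $\beta\in\relint\;\mathscr{D}_\Phi$ and $\relint$ of a polytope is an \emph{open} subset of its affine hull, so perturbing the (possibly zero) weights $\lambda(v)$ of the given representation toward the uniform weights keeps $\beta$ fixed as long as we stay on the affine relation; since the uniform weight vector and the given weight vector both lie in the (nonempty, convex) set of weight vectors representing $\beta$, and the former has all coordinates positive, a convex combination close to the uniform one gives a representation of $\beta$ with all weights $>0$. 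This yields (2) for the given $\Delta$, and since $\Delta$ was arbitrary, (2) holds for every linearizing $\Delta$. The genuine work here is this perturbation argument; it is the step I expect to require the most care, because one must be sure the set of representing weight vectors is exactly a polytope (a fibre of the barycentric-coordinate map) and that it contains the uniform vector — the latter being immediate and the former standard.

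For the final decidability claim: a book $\beta\in\mathbb{Q}^k$ is strictly coherent iff $\beta\in\relint\;\mathscr{D}_\Phi$, and $\mathscr{D}_\Phi$ is a \emph{rational} polytope — indeed, fixing a regular complex $\Delta$ linearizing $\Phi$ (which can be computed from $\Phi$), $\mathscr{D}_\Phi=\conv\{\langle f_1(v),\dots,f_k(v)\rangle\mid v\in V(\Delta)\}$ and the $v\in V(\Delta)$ are rational points with rational images under the $f_i$ (the $f_i$ being McNaughton functions linear on each simplex of $\Delta$). Thus one can algorithmically produce the finite rational vertex list, compute the affine hull and a finite system of rational linear inequalities defining $\mathscr{D}_\Phi$ within it, and then check whether $\beta$ satisfies the affine equations and the \emph{strict} versions of the defining inequalities; this is a decidable test on rational input (e.g.\ via linear programming over $\mathbb{Q}$), so $\mathscr{K}_\Phi^{\mathbb{Q}}$ is decidable.
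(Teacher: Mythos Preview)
Your treatment of (1)$\Leftrightarrow$(3) and of (2)$\Rightarrow$(1) is correct and matches the paper's line (the paper simply cites Theorem \ref{thm:MainGeo}, Corollary \ref{corollary1} and Lemma \ref{lemma:conv1}(4) without spelling out details). Your decidability argument, via computing a rational $H$-description of $\mathscr{D}_\Phi$ and testing the strict inequalities, is a legitimate alternative to the paper's route, which instead invokes condition (2) and reduces to the feasibility over $\mathbb{Q}$ of the linear system $\lambda(v_i)>0$, $\sum_i\lambda(v_i)=1$, $\sum_i\lambda(v_i)f_j(v_i)=\beta(f_j)$.

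There is, however, a genuine gap in your (1)$\Rightarrow$(2) step. You assert that ``the uniform weight vector and the given weight vector both lie in the (nonempty, convex) set of weight vectors representing $\beta$'', but this is false: the uniform weight vector on $V(\Delta)$ represents the barycenter $\beta_0$, not $\beta$, so it does \emph{not} lie in the fibre over $\beta$, and perturbing your weight vector $\lambda$ toward the uniform one moves the represented point away from $\beta$. The standard fix is the following: since $\beta\in\relint\,\mathscr{D}_\Phi$ and $\beta_0\in\mathscr{D}_\Phi$, there exists $\delta>0$ with $\beta':=\beta+\delta(\beta-\beta_0)\in\mathscr{D}_\Phi$; write $\beta'=\sum_{v}\mu(v)\langle f_1(v),\dots,f_k(v)\rangle$ with $\mu(v)\ge 0$ and $\sum_v\mu(v)=1$. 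Then $\beta=\tfrac{1}{1+\delta}\beta'+\tfrac{\delta}{1+\delta}\beta_0$, so
\[
\lambda(v):=\tfrac{1}{1+\delta}\,\mu(v)+\tfrac{\delta}{(1+\delta)\,|V(\Delta)|}>0\quad\text{for every }v\in V(\Delta),
\]
with $\sum_v\lambda(v)=1$ and $\sum_v\lambda(v)\langle f_1(v),\dots,f_k(v)\rangle=\beta$, which is the required strictly positive representation. (Your aborted first attempt with $\beta_\varepsilon=(1-\varepsilon)\beta+\varepsilon\beta_0$ was on this line but moving in the wrong direction.)
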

\begin{proof}
The equivalence between (1), (2) and (3)  follows from Theorem \ref{thm:MainGeo}, Corollary \ref{corollary1} and Lemma \ref{lemma:conv1} (4). To conclude the proof we will prove the decidability of the set $\mathscr{K}_\Phi^\mathbb{Q}$. To this purpose, given $\Phi$, the problem of determining a regular complex $\Delta$ which linearizes all McNaughton functions $f_i\in \Phi$,  is computable by a Turing machine (see \cite[Theorem 7.1, Claim 3]{MuBookmaking}).
Therefore, by (2), $\mathscr{K}_\Phi^\mathbb{Q}$ is decidable iff the following {\em bounded mixed integer programming} problem (see \cite{Hah}) with unknowns $\lambda(v_i)$ for all $v_i\in V(\Delta)$,  has a solution in $[0,1]\cap\mathbb{Q}$:
$$
(S_{\mathscr{K}_\Phi})=
\left\{\begin{array}{l}
\lambda(v_i)>0,\\
\sum_{v_i}\lambda(v_i)=1,\\
\sum_{v_i}\lambda(v_i)\cdot f_\varphi(v_i)=\beta(\varphi).
\end{array}
\right.
$$
 Thus the decidability of $\mathscr{K}_\Phi^\mathbb{Q}$ follows from \cite[Proposition 2]{Hah}.

\end{proof}
\section{Strict coherence, infinite-valued events and faithful states.}\label{sec:statesFaith}
Generalizing de Finetti's theorem, a book on \luk\ events is {\em coherent} iff it can be extended to a state in the sense of the following definition.

\begin{definition}[\cite{Mu}]
A {\em state} of an MV-algebra ${\bf A}$ is a map $s:A\to[0,1]$ satisfying the following conditions:
\begin{itemize}
\item[$(s1)$] Normalization: $s(1)=1$,
\item[$(s2)$] Additivity: $s(a\oplus b)=s(a)+ s(b)$, for all $a,b\in A$ such that $a\odot b=0$.
\end{itemize}
A state $s$ is said to be {\em faithful}  if  $s(a)\neq0$ for all $a\neq0$.
\end{definition}

Kroupa and Panti independently proved that for every state $s$ of an MV-algebra ${\bf A}$ there exists a unique regular Borel, and hence $\sigma$-additive, probability measure $\mu_s$ on the space of maximal ideals with the hull-kernel topology of ${\bf A}$ such that $s$ is the integral with respect to $\mu_s$ (see \cite{KroupaInt}, \cite{Panti} and \cite[\S10]{Mu12}). In particular, for $n$-generated free MV-algebras, the Kroupa-Panti theorem shows that for every state $s$ of $\free_n$ there exists a unique regular Borel probability measure $\mu_s$ on $[0,1]^n$ such that for each $f\in \free_n$,
\begin{equation}\label{eq:KP}
s(f)=\int_{[0,1]^n}f\;{\rm d}\mu_s.
\end{equation}
The correspondence between  states of $\free_n$ and regular Borel probability measures on $[0,1]^n$ is one-one.

 The next result, which to the best of our knowledge is new, represents faithful states of $\free_n$ in a similar manner. Following \cite{Todo}, we say that a regular Borel measure $\mu$ of $[0,1]^n$ is  {\em strictly positive} if for every nonempty open $O\subseteq[0,1]^n$, $\mu(O)>0$.
 
\begin{prop}\label{propFaith}
For any state $s$ of $\free_n$ the following conditions are equivalent:
\begin{enumerate}
\item $s$ is a faithful state;
\item There exists a unique strictly positive, regular probability Borel measure $\mu_s$ such that for every $f\in \free_n$,
$$
s(f)=\int_{[0,1]^n} f\;{\rm d}\mu_s.
$$
\end{enumerate}
The correspondence between faithful states of $\free_n$ and strictly positive, regular probability Borel measures of $[0,1]^n$ is one-one.
\end{prop}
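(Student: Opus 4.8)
The plan is to leverage the Kroupa--Panti theorem recorded in \eqref{eq:KP}. Since that theorem already gives a one-one correspondence between states of $\free_n$ and regular Borel probability measures on $[0,1]^n$, the whole proposition reduces to showing that, under this correspondence, the faithful states are precisely the ones whose representing measure $\mu_s$ is strictly positive; the claimed one-one correspondence between faithful states and strictly positive measures is then obtained by restricting the Kroupa--Panti bijection. Thus I would only have to prove the equivalence of (1) and (2).

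For $(2)\Rightarrow(1)$, suppose $\mu_s$ is strictly positive and let $f\in\free_n$ with $f\neq 0$. As a McNaughton function, $f$ is continuous and $[0,1]$-valued, hence $f\geq 0$, and the set $U=\{x\in[0,1]^n\mid f(x)>0\}$ is open and nonempty. Fixing $x_0\in U$ and putting $c=f(x_0)/2>0$, the set $\{x\mid f(x)>c\}$ is a nonempty open subset of $[0,1]^n$, so its $\mu_s$-measure is positive; therefore $s(f)=\int_{[0,1]^n}f\,{\rm d}\mu_s\geq c\cdot\mu_s(\{f>c\})>0$, and $s$ is faithful.

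For $(1)\Rightarrow(2)$, I would argue by contraposition. If $\mu_s$ is not strictly positive, fix a nonempty open $O\subseteq[0,1]^n$ with $\mu_s(O)=0$. The key step is to exhibit a nonzero element of $\free_n$ whose support is contained in $O$. Choose a rational point $a=\langle a_1,\dots,a_n\rangle\in O$, let $d$ be a common denominator of $a_1,\dots,a_n$ taken large enough that the relatively open box $B=\prod_{i=1}^{n}\bigl((a_i-\tfrac1d,a_i+\tfrac1d)\cap[0,1]\bigr)$ lies inside $O$, and set
$$
g_i(x)=0\vee\bigl(1-|d\,x_i-d\,a_i|\bigr)\quad(i=1,\dots,n),\qquad f=g_1\wedge\cdots\wedge g_n.
$$
Each $g_i$ is continuous, $[0,1]$-valued and piecewise linear with finitely many pieces and integer coefficients (its nonzero pieces have slopes $\pm d$ and constant terms $1\pm d\,a_i\in\mathbb{Z}$), so $g_i\in\free_n$ by McNaughton's theorem, whence $f\in\free_n$; moreover $f(a)=1$, so $f\neq 0$, while $\{x\mid f(x)>0\}=B\subseteq O$. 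Consequently $s(f)=\int_{[0,1]^n}f\,{\rm d}\mu_s=\int_{B}f\,{\rm d}\mu_s\leq\mu_s(O)=0$, and since $f\geq 0$ this forces $s(f)=0$ with $f\neq 0$, so $s$ is not faithful.

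The only delicate point is the construction in the last paragraph: one needs a nonzero McNaughton function with arbitrarily small support concentrated near a prescribed rational point, and the explicit truncated-tent functions $g_i$ supply it; alternatively one could invoke the existence of arbitrarily fine regular complexes of $[0,1]^n$ having $a$ among their vertices and take the normalized Schauder hat at $a$ over such a complex (cf.\ Lemma~\ref{lemmaSchauder}), whose support is the open star of $a$. The direction $(2)\Rightarrow(1)$ is just the standard fact that a Borel measure of full support integrates any nonnegative, not-identically-zero continuous function to a strictly positive value, so nothing more is required there, and the final one-one correspondence is immediate from Kroupa--Panti once the equivalence $(1)\Leftrightarrow(2)$ is in place.
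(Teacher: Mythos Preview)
Your proof is correct and follows the same overall strategy as the paper: invoke Kroupa--Panti to reduce the claim to ``faithful $\Leftrightarrow$ strictly positive'', and establish each direction by exhibiting, from a failure of one property, a witness to the failure of the other. The only substantive difference is in how the McNaughton function with support inside a given open set $O$ is produced for $(1)\Rightarrow(2)$: the paper picks a rational polytope $K_O\subseteq O$, cites \cite[Corollary~2.10]{Mu12} to get an $f$ with oneset $K_O$, and then \cite[Lemma~2.2(i)]{KroupaPoly} to shrink the support of $f^m$ inside $O$; you instead give an explicit truncated-tent construction around a rational point of $O$ and verify by hand that it is a McNaughton function. Your route is more self-contained and elementary, at the cost of a small explicit computation; the paper's route is shorter on the page but imports two external facts. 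For $(2)\Rightarrow(1)$ the paper also argues by contraposition (showing that $s(f)=0$ with $f\neq 0$ forces $\mu_s(\mathrm{supp}(f))=0$), while you argue directly via $s(f)\geq c\cdot\mu_s(\{f>c\})>0$; these are equivalent standard arguments.
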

\begin{proof}
For every state $s$ of $\free_n$, let $\mu_s$ be the unique  regular probability Borel measure of $[0,1]^n$ such that for every $f\in \free_n$,
$
s(f)=\int_{[0,1]^n} f\;{\rm d}\mu_s
$
as in the Kroupa-Panti theorem.
\vspace{.2cm}
 
$(1)\Rightarrow(2)$. Assume that $O$ is a nonempty open set in the product topology of $[0,1]^n$, and $\mu_s(O)=0$. Let $K_O$ be any nonempty compact subset of $O$ and assume, without loss of generality, that $K_O$ is a rational polyhedron. By \cite[Corollary 2.10]{Mu12}, 
there exists  $f\in \free_n$ such that $K_O$ is the oneset of $f$. Since $K_O$ is contained in $O$, by \cite[Lemma 2.2 (i)]{KroupaPoly} there exists  $n\in \mathbb{N}$ such that, for all $x\in [0,1]^n$, the $n$-fold $\odot$-product $f^n=f\odot\ldots\odot f$, satisfies $f^n(x)=1$ if $x\in K_O$ and $f^n(x)=0$ for all $x\not\in O$. Therefore, $f^n\neq 0$ and $s(f^n)=\int_{[0,1]^n}f^n\; {\rm d}\mu_s=0$ whence $s$ is not faithful.
\vspace{.2cm}

\noindent $(2)\Rightarrow(1)$. Assume that $s$ is not faithful and in particular, let $f\in \free_n$ be such that $f\neq 0$ and $s(f)=0$. Since $f$ is continuous and not constantly $0$, its support ${\rm supp}(f)=\{x\in [0,1]^n\mid f(x)>0\}$ is nonempty and open. 
Thus,  
$$
0=s(f)=\int_{[0,1]^n}f\;{\rm d}\mu_s=\int_{{\rm supp}(f)}f\;{\rm d}\mu_s,
$$ 
whence $\mu_s({\rm supp}(f))=0$. 
\end{proof}

\begin{remark}\label{remFaith}
From Proposition \ref{propFaith} it follows that if  ${\bf A}=\free_{n}/I_C$ is a finite-dimensional MV-algebra,  there is a one-one correspondence between faithful states of ${\bf A}$,  strictly positive distributions on the  points $c_1,\ldots, c_t$ of $C$, and points in the relative interior of the simplex $\Sigma_\Delta=\left\{\langle\lambda_1,\ldots, \lambda_{t}\rangle\in \mathbb{R}^{t}\mid\sum_{i=1}^t\lambda_i=1\right\}$ (see \cite[Remark 6.3]{FHM18}). 
Every finitely generated free boolean algebra ${\bf A}$ is, in particular, a finite-dimensional MV-algebra and every faithful state $s$ of ${\bf A}$ is a Carnap probability (recall Section \ref{sec:intro}). Therefore, Proposition \ref{propFaith} specializes on boolean events as follows: for every $n=1,2,\ldots$, a finitely additive probability $P$ of the $n$-generated free boolean algebra ${\bf A}$ is Carnap iff there exists a unique strictly positive distribution $\mu_P$ on $\{0,1\}^n$ such that for every $f\in A$,
$
P(f)=\sum_{x\in\{0,1\}^n}\mu_P(x)\cdot f(x). 
$
\end{remark}
 
In \cite{FHM18}, the authors characterized strictly coherent books on finite subsets of a finite-dimensional MV-algebra ${\bf A}$ (recall Section \ref{sec:introAlg}) as those books that can be extended to a faithful state of ${\bf A}$. 
In this section, we will provide two measure-theoretical characterizations of strict coherence for books on $\free_n$. The first one (Theorem \ref{thm:strictFree}) involves states satisfying a {\em local} version of faithfulness which depends both on $\Phi$ and on the fixed regular complex linearizing its functions; the second one (Theorem \ref{main2}) is given in terms of faithful states of $\free_n$.

\begin{definition}
Let $\Phi$ be a finite subset of $\free_n$ and let $\Delta$ be a regular complex which linearizes $\Phi$. Then $s$ is said to be {\em $\Delta$-faithful} provided that $s(\hat{h}_v)>0$ for all $v\in V(\Delta)$.
\end{definition}

The next lemma collects some useful properties of states and $\Delta$-faithful states.
\begin{lem}\label{lemma:statesFinite}
For each regular complex $\Delta$ of $[0,1]^n$ with vertices $V(\Delta)=\{v_1,\ldots, v_t\}$, the following conditions hold:
\begin{enumerate}
\item For each map $\lambda:V(\Delta)\to[0,1]$ such that $\sum_{v\in V(\Delta)}\lambda(v)=1$, the map $s_\lambda:\free_n\to [0,1]$ defined as 
\begin{equation}\label{eq:slambda}
s_\lambda(f)=\sum_{v\in V(\Delta)} f(v)\cdot \lambda(v).
\end{equation}
is a state of $\free_n$.
\item The set of $\Delta$-faithful states of $\free_n$ is in one-one correspondence with the set of faithful states of $\free_n/I_{V(\Delta)}$ and  hence is in one-one correspondence  
with the relative interior of the simplex 
$$
\Sigma_\Delta=\left\{\langle\lambda_1,\ldots, \lambda_t\rangle\in \mathbb{R}^t\mid\sum_{i=1}^t\lambda_i=1\right\}.
$$
\end{enumerate}
\end{lem}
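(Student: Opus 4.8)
The plan is to verify the two claims in turn, relying on the Schauder-hat machinery of Lemma \ref{lemmaSchauder} and on the identification of $\free_n/I_{V(\Delta)}$ with a finite power of $[0,1]_{MV}$. For part (1), I would check the two state axioms directly from the definition \eqref{eq:slambda}. Normalization is immediate: $s_\lambda(1)=\sum_{v}1\cdot\lambda(v)=\sum_v\lambda(v)=1$. For additivity, suppose $f\odot g=0$ in $\free_n$; then for every vertex $v\in V(\Delta)$ we have $f(v)\odot g(v)=0$ in $[0,1]_{MV}$, i.e.\ $f(v)+g(v)\le 1$, so that $(f\oplus g)(v)=\min\{1,f(v)+g(v)\}=f(v)+g(v)$. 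Multiplying by $\lambda(v)\ge 0$ and summing over $v$ gives $s_\lambda(f\oplus g)=\sum_v(f(v)+g(v))\lambda(v)=s_\lambda(f)+s_\lambda(g)$, as required. (I should also note $s_\lambda$ takes values in $[0,1]$, which is clear since each $f(v)\in[0,1]$ and the $\lambda(v)$ form a subprobability distribution summing to $1$.)

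For part (2), the key observation is that the quotient map $\pi\colon\free_n\to\free_n/I_{V(\Delta)}$ sends $f$ to its restriction to the finite set $V(\Delta)=\{v_1,\dots,v_t\}$, and by the remark following Proposition \ref{prop1} (the result of \cite{CiMa12}) this quotient is isomorphic to $[0,1]_{MV}^t$ via $f\mapsto\langle f(v_1),\dots,f(v_t)\rangle$. Under this isomorphism the $i$-th coordinate projection corresponds to the class of the Schauder hat $\hat h_i$, because $\hat h_i(v_j)=\delta_{ij}$. I would first show that every state $s$ of $\free_n$ that is $\Delta$-faithful factors through $\pi$. Indeed, if $f\in I_{V(\Delta)}$, i.e.\ $f(v)=0$ for all $v\in V(\Delta)$, then by Lemma \ref{lemmaSchauder}(3), $f=\bigoplus_{i=1}^t f(v_i)\cdot\hat h_i=\bigoplus_{i=1}^t 0=0$, so $I_{V(\Delta)}=\{0\}$ restricted to the relevant functions — more precisely, any state is determined on $\free_n$ by its values on the $\hat h_i$, since $f=\bigoplus_i f(v_i)\hat h_i$ with the summands pairwise $\odot$-orthogonal (Lemma \ref{lemmaSchauder}(1)) forces, by repeated use of additivity, $s(f)=\sum_i f(v_i)\,s(\hat h_i)$. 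Thus the assignment $s\mapsto(s(\hat h_1),\dots,s(\hat h_t))$ is well defined, and since $\sum_i\hat h_i=1$ (again Lemma \ref{lemmaSchauder}(2), combined with orthogonality and additivity) the tuple lies in $\Sigma_\Delta$; $\Delta$-faithfulness says exactly that all coordinates are strictly positive, i.e.\ the tuple lies in $\relint\Sigma_\Delta$ by Lemma \ref{lemma:conv1}(4).

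Conversely, given $\langle\lambda_1,\dots,\lambda_t\rangle\in\relint\Sigma_\Delta$, part (1) produces a state $s_\lambda$ with $s_\lambda(\hat h_i)=\sum_v\hat h_i(v)\lambda(v)=\lambda_i>0$, so $s_\lambda$ is $\Delta$-faithful, and the two constructions are mutually inverse by the computation $s(f)=\sum_i f(v_i)s(\hat h_i)$ above. Finally, the identification with faithful states of $\free_n/I_{V(\Delta)}$ is the translation of this bijection through the isomorphism $\free_n/I_{V(\Delta)}\cong[0,1]_{MV}^t$: a state of $[0,1]_{MV}^t$ is precisely a convex combination $\langle\lambda_1,\dots,\lambda_t\rangle$ of the coordinate Dirac states, and it is faithful iff every $\lambda_i>0$, which is Remark \ref{remFaith} specialized to $C=V(\Delta)$. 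I expect the only mildly delicate point to be the careful bookkeeping in the inductive argument that additivity of a state, together with $\bigoplus_{i=1}^t\hat h_i=1$ and the pairwise orthogonality $\hat h_i\odot\hat h_j=0$, yields $s(\bigoplus_i a_i\hat h_i)=\sum_i a_i s(\hat h_i)$ for arbitrary coefficients $a_i\in[0,1]$; this follows by first noting $a_i\hat h_i\odot a_j\hat h_j=0$ for $i\neq j$ (since $\odot$ is order-preserving and $\hat h_i\odot\hat h_j=0$) and then applying $(s2)$ $t-1$ times, but it is worth stating explicitly rather than leaving to the reader.
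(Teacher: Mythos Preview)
Your direct verification of part (1) is correct and more elementary than the paper's, which instead invokes the description of the state space as the closed convex hull of the homomorphisms of $\free_n$ into $[0,1]_{MV}$ together with Proposition~\ref{prop1}.

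Part (2), however, contains a genuine error. Lemma~\ref{lemmaSchauder}(3) asserts the decomposition $f=\bigoplus_i f(v_i)\hat h_i$ only for $f\in\Phi$, i.e.\ for functions \emph{linearized by} $\Delta$, not for arbitrary $f\in\free_n$. For a general $f$ it fails: with $n=1$, $V(\Delta)=\{0,1\}$ and $f=x\wedge\neg x$, one has $f(0)=f(1)=0$ yet $f(1/2)=1/2$, so $I_{V(\Delta)}\neq\{0\}$, contrary to what you write. The same example shows that a state of $\free_n$ is \emph{not} determined by its values on the Schauder hats: the Lebesgue state $s(g)=\int_0^1 g\,dx$ and the state $s_{(1/2,1/2)}$ both satisfy $s(\hat h_0)=s(\hat h_1)=1/2$ (hence both are $\Delta$-faithful), yet the former assigns $1/4$ to $x\wedge\neg x$ while the latter assigns $0$. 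Thus your map $s\mapsto(s(\hat h_1),\dots,s(\hat h_t))$ is not injective on $\Delta$-faithful states, and the argument collapses. In fact this shows the lemma's literal wording is too strong: there can be no bijection between \emph{all} $\Delta$-faithful states of $\free_n$ and $\relint\,\Sigma_\Delta$. What is true---and what the paper's terse proof via Remark~\ref{remFaith} actually supports---is the bijection between faithful states of the quotient $\free_n/I_{V(\Delta)}$ and $\relint\,\Sigma_\Delta$, together with the fact that each such state lifts via part (1) to a $\Delta$-faithful $s_\lambda$ on $\free_n$. Your ``Conversely'' paragraph correctly establishes this lift, and that is all that the downstream applications (Theorem~\ref{thm:strictFree} and its corollary) actually use.
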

\begin{proof}
(1). Every state of $\free_n$ belongs to the closure of the convex hull of the homomorphisms of $\free_n$ to $[0,1]_{MV}$ (see \cite[Theorem 2.5]{Mu} and \cite[Theorem 4.1.1]{FK15}). Thus the claim follows immediately from Proposition \ref{prop1}

\vspace{.2cm}

\noindent(2). 
The claim easily follows from Proposition \ref{propFaith}, Remark \ref{remFaith} and the  definition of $\Delta$-faithfulness.
\end{proof}

The next theorem yields a characterization of strictly coherent books in terms of $\Delta$-faithful states. 
\begin{thm}\label{thm:strictFree}
Let $\Phi$ be a finite subset of $\free_n$ and let $\beta$ be a book on $\Phi$. Then the following conditions are equivalent:
\begin{enumerate}
\item $\beta$ is strictly coherent;
\item There exists a regular complex $\Delta$ which linearizes $\Phi$ and a $\Delta$-faithful state $s$ which extends $\beta$;
\item For every regular complex $\Delta$ which linearizes $\Phi$, there exists a $\Delta$-faithful state $s$ of $\free_n$ which extends $\beta$. 
\end{enumerate}
\end{thm}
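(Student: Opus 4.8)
The plan is to prove the cycle $(1)\Rightarrow(3)\Rightarrow(2)\Rightarrow(1)$. Two facts already in hand do most of the work: Corollary~\ref{cor:decidable} says $\beta$ is strictly coherent precisely when, over every regular complex $\Delta$ linearizing $\Phi$, there is a strictly positive weighting $\lambda\colon V(\Delta)\to(0,1]$ with $\sum_{v\in V(\Delta)}\lambda(v)=1$ and $\beta(f_j)=\sum_{v\in V(\Delta)}\lambda(v)\,f_j(v)$ for all $f_j\in\Phi$; and Lemma~\ref{lemma:statesFinite}(1) turns any such $\lambda$ into a state $s_\lambda$ of $\free_n$ via \eqref{eq:slambda}. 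The bridge between the two sides is the elementary computation $s_\lambda(\hat h_v)=\sum_{w\in V(\Delta)}\hat h_v(w)\,\lambda(w)=\lambda(v)$, valid because $\hat h_v$ takes value $1$ at $v$ and $0$ at every other vertex of $\Delta$; hence $s_\lambda$ is $\Delta$-faithful exactly when $\lambda(v)>0$ for all $v$.

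For $(1)\Rightarrow(3)$, fix an arbitrary regular complex $\Delta$ linearizing $\Phi$, take the strictly positive $\lambda$ supplied by Corollary~\ref{cor:decidable}, and set $s=s_\lambda$. Then $s(f_j)=\sum_v\lambda(v)f_j(v)=\beta(f_j)$, so $s$ extends $\beta$, and $s(\hat h_v)=\lambda(v)>0$ for every $v\in V(\Delta)$, so $s$ is $\Delta$-faithful. The implication $(3)\Rightarrow(2)$ is immediate, since (Section~\ref{sec:introAlg}, following \cite[\S3]{MuBookmaking}) at least one regular complex of $[0,1]^n$ linearizes the finite set $\Phi$.

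For $(2)\Rightarrow(1)$, suppose $\Delta$ linearizes $\Phi$ and $s$ is a $\Delta$-faithful state extending $\beta$. Put $\lambda(v)=s(\hat h_v)$; by $\Delta$-faithfulness each $\lambda(v)>0$. By Lemma~\ref{lemmaSchauder}(1)--(2) the Schauder hats over $\Delta$ are pairwise $\odot$-orthogonal with $\bigoplus_{v}\hat h_v=1$, so iterating the additivity axiom $(s2)$ gives $\sum_v\lambda(v)=s(1)=1$. Next, since $\Delta$ linearizes each $f_j$, on every simplex of $\Delta$ the function $f_j$ is the barycentric interpolation of its vertex values, i.e.\ $f_j=\sum_{v\in V(\Delta)}f_j(v)\,\hat h_v$ holds as a pointwise identity of functions on $[0,1]^n$ (cf.\ Lemma~\ref{lemmaSchauder}(3)); integrating this against the Kroupa--Panti measure $\mu_s$ of \eqref{eq:KP} and using linearity of the integral yields $\beta(f_j)=s(f_j)=\sum_v f_j(v)\,\lambda(v)$. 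Thus $\beta=\sum_{v\in V(\Delta)}\lambda(v)\,\langle f_1(v),\dots,f_k(v)\rangle$ is a strictly positive convex combination of the finitely many points generating $\mathscr{D}_\Phi=\mathscr{C}_\Phi(V(\Delta))$ (Corollary~\ref{corollary1}). Each extremal point of $\mathscr{D}_\Phi$ is itself one of these $\langle f_1(v),\dots,f_k(v)\rangle$ and hence already carries positive weight $\lambda(v)$; expressing the remaining (non-extremal) vertex images as convex combinations over $\ext\;\mathscr{D}_\Phi$ only adds to those weights, so $\beta$ is a strictly positive convex combination of $\ext\;\mathscr{D}_\Phi$. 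By Lemma~\ref{lemma:conv1}(4) this places $\beta$ in $\relint\;\mathscr{D}_\Phi$, hence in $\mathscr{K}_\Phi$ by Theorem~\ref{thm:MainGeo}; that is, $\beta$ is strictly coherent.

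The step most likely to need care is the identity $\beta(f_j)=\sum_v f_j(v)\,s(\hat h_v)$ inside $(2)\Rightarrow(1)$: a general state of $\free_n$ is not determined by its values on the Schauder hats of $\Delta$ — distinct states can agree on all of them — so one cannot invoke the bijection of Lemma~\ref{lemma:statesFinite}(2) to conclude $s=s_\lambda$. What rescues the argument is that $f_j$ lies in the span, over $\Delta$, of the hats, so every state, and in particular $s$, agrees on $f_j$ with the finite sum $\sum_v(\cdot)(v)\,s(\hat h_v)$; the cleanest way to make this rigorous is the integral representation \eqref{eq:KP}, which is precisely why I route the computation through $\mu_s$ rather than through purely algebraic manipulations with the (non-integral-coefficient) expressions $f_j(v)\cdot\hat h_v$.
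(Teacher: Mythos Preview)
Your proof is correct and follows essentially the same route as the paper: both argue $(1)\Rightarrow(3)$ via Corollary~\ref{cor:decidable} to produce a strictly positive $\lambda$ and set $s=s_\lambda$, and both reverse the direction by putting $\lambda(v)=s(\hat h_v)$, using Lemma~\ref{lemmaSchauder} to get $\sum_v\lambda(v)=1$ and $\beta(f_j)=\sum_v f_j(v)\lambda(v)$, then invoking the geometric characterization of strict coherence. Your exposition is in fact a bit more careful than the paper's in two places---you route the identity $s(f_j)=\sum_v f_j(v)\,s(\hat h_v)$ through the Kroupa--Panti integral (the paper simply asserts it), and you spell out why a strictly positive combination over the generators $\langle f_1(v),\dots,f_k(v)\rangle$ yields one over $\ext\,\mathscr{D}_\Phi$ before invoking Lemma~\ref{lemma:conv1}(4)---but these are refinements of the same argument, not a different approach.
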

\begin{proof}
$(1)\Rightarrow(3)$. Let $\beta$ be strictly coherent. From Corollary \ref{cor:decidable} (3), for every regular complex $\Delta$ linearizing $\Phi$, there exists a map $\lambda:V(\Delta)\to[0,1]$ such that $\sum_{v\in V(\Delta)}\lambda(v)=1$, $\lambda(v)>0$ for all $v\in V(\Delta)$ and for every $f_j\in \Phi$, 
\begin{equation}\label{eq:beta1}
\beta(f_j)=\sum_{v\in V(\Delta)}f_j(v)\cdot \lambda(v).
\end{equation}
Let $s_\lambda$ be the state of $\free_n$ defined in (\ref{eq:slambda}). 

First of all notice that, directly from (\ref{eq:slambda}) and (\ref{eq:beta1}), $s_\lambda$ extends $\beta$. Furthermore, for every vertex  $v\in V(\Delta)$, the normalized Schauder hat $\hat{h}_v$ takes value $1$ on $v$ and $0$ on any  $v'\neq v$. Thus, $s_\lambda(\hat{h}_v)=\sum_{v'\in V(\Delta)}\hat{h}_v(v')\cdot \lambda(v')=\lambda(v)$ and hence $s_\lambda(\hat{h}_v)>0$. Therefore $s_\lambda$ is a $\Delta$-faithful state of $\free_n$ which extends $\beta$.  
\vspace{.2cm}

\noindent$(3)\Rightarrow(1)$. Now assume that (3) holds and define $\lambda:V(\Delta)\to[0,1]$ by
$$
\lambda(v)=s(\hat{h}_v).
$$
From Lemma \ref{lemmaSchauder},  
$$
\sum_{v\in V(\Delta)}\lambda(v)=\sum_{v\in V(\Delta)}s(\hat{h}_v)=s\left(\bigoplus_{v\in V(\Delta)}\hat{h}_v\right)=s(1)=1. 
$$
Since $s$ is $\Delta$-faithful, $\lambda(v)>0$ for each $v\in V(\Delta)$. Further, for all $f_i\in \Phi$, $\beta(f_i)=s(f_i)=\sum_{v\in V(\Delta)} \lambda_{v}\cdot f_i(v)$. 
Thus, $\beta$ is strictly coherent by Corollary \ref{cor:decidable} ($(2)\Rightarrow(1)$).
\vspace{.2cm}

\noindent Finally, $(3)\Rightarrow(2)$ is trivial and  $(2)\Rightarrow(3)$ follows from $(1)\Leftrightarrow(3)$ above, Corollary \ref{corollary1} and  Theorem \ref{thm:MainGeo}
\end{proof}

 Lemma \ref{lemma:statesFinite} (2) shows that for each regular complex $\Delta$ which linearizes $\Phi$, $\Delta$-faithful states of $\free_n$ are in one-one correspondence with faithful states of $\free_n/I_{V(\Delta)}$. In particular, for  every $\Delta$-faithful  state $s$ of $\free_n$, let $s_\Delta$ be the unique faithful state of $\free_n/I_{V(\Delta)}$ such that: for every $f\in \free_n$, let $f_\Delta$ to be the restriction of $f$ to $V(\Delta)$ and
$$
s_\Delta(f_\Delta)=\sum_{v\in V(\Delta)} s(\hat{h}_v)\cdot f_\Delta(v).
$$
Thus, if $f\in \Phi$,  $s_\Delta(f_\Delta)=s(f)$. We then have:

\begin{cor}
Let $\Phi$ be a finite subset of $\free_n$ and let $\beta$ be a book on $\Phi$. Then the following conditions are equivalent:
\begin{enumerate}
\item $\beta$ is strictly coherent;
\item There exists a regular complex $\Delta$ which linearizes $\Phi$ and a faithful state $s_\Delta$ of $\free_n/I_{V(\Delta)}$ such that, for all $f\in \Phi$, $\beta(f)=s_\Delta(f_\Delta)$;
\item For every regular complex $\Delta$ which linearizes $\Phi$, there exists a faithful state $s_\Delta$ of $\free_n/I_{V(\Delta)}$ such that, for all $f\in \Phi$, $\beta(f)=s_\Delta(f_\Delta)$. 
\end{enumerate}
\end{cor}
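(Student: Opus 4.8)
The plan is to read this corollary off Theorem~\ref{thm:strictFree} through the dictionary, recorded in the paragraph preceding the statement, between $\Delta$-faithful states of $\free_n$ and faithful states of the finite-dimensional quotient $\free_n/I_{V(\Delta)}$. Recall that, by Lemma~\ref{lemma:statesFinite}~(2), for a fixed regular complex $\Delta$ linearizing $\Phi$ the map $s\mapsto s_\Delta$ is a bijection from the $\Delta$-faithful states of $\free_n$ onto the faithful states of $\free_n/I_{V(\Delta)}$, and that by construction $s_\Delta(f_\Delta)=\sum_{v\in V(\Delta)}s(\hat h_v)\cdot f_\Delta(v)$; since $\Delta$ linearizes $\Phi$, Lemma~\ref{lemmaSchauder}~(3) together with the additivity of $s$ gives $s_\Delta(f_\Delta)=s(f)$ for every $f\in\Phi$. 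Thus ``$s_\Delta$ represents $\beta$ on $\Phi$'' is literally the same assertion as ``$s$ extends $\beta$'' under this bijection, and the corollary becomes a pure translation of Theorem~\ref{thm:strictFree} from the language of $\free_n$ to that of its finite quotients.

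For $(1)\Rightarrow(3)$ I would take $\beta$ strictly coherent, fix an arbitrary regular complex $\Delta$ linearizing $\Phi$, and invoke Theorem~\ref{thm:strictFree}~$((1)\Rightarrow(3))$ to obtain a $\Delta$-faithful state $s$ of $\free_n$ extending $\beta$; the associated faithful state $s_\Delta$ of $\free_n/I_{V(\Delta)}$ then satisfies $\beta(f)=s(f)=s_\Delta(f_\Delta)$ for all $f\in\Phi$, which is exactly (3). The implication $(3)\Rightarrow(2)$ is immediate once one recalls that at least one regular complex linearizing $\Phi$ exists (as noted in Section~\ref{sec:introAlg}, following \cite[\S3]{MuBookmaking}). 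For $(2)\Rightarrow(1)$ I would start from such a $\Delta$ and a faithful state $s_\Delta$ of $\free_n/I_{V(\Delta)}$ with $\beta(f)=s_\Delta(f_\Delta)$ for $f\in\Phi$, pull it back along the bijection of Lemma~\ref{lemma:statesFinite}~(2) to a $\Delta$-faithful state $s$ of $\free_n$ with $s(f)=s_\Delta(f_\Delta)=\beta(f)$ on $\Phi$, i.e.\ $s$ extends $\beta$, and conclude strict coherence by Theorem~\ref{thm:strictFree}~$((2)\Rightarrow(1))$.

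There is no genuine obstacle here: the only point that requires a word of care is the compatibility $s_\Delta(f_\Delta)=s(f)$ on $\Phi$, which is precisely what the construction of $s_\Delta$ preceding the statement secures, so the argument is essentially bookkeeping. I would therefore keep the proof to a few lines, citing Theorem~\ref{thm:strictFree}, Lemma~\ref{lemma:statesFinite}~(2), and the displayed identity defining $s_\Delta$.
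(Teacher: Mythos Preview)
Your proposal is correct and follows exactly the route the paper intends: the corollary is stated without proof because it is meant to be read off Theorem~\ref{thm:strictFree} via the bijection of Lemma~\ref{lemma:statesFinite}~(2) and the displayed identity $s_\Delta(f_\Delta)=s(f)$ established in the paragraph immediately preceding the statement. Your write-up simply makes explicit the bookkeeping the paper leaves implicit.
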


The following construction is used in the next result which characterizes strictly coherent books in terms of faithful states: 
let $\beta$ be a strictly coherent book on a finite subset $\Phi$ of $\free_n$, fix an enumeration $g_1,g_2,\ldots$ of $\free_{n}\setminus\{\Phi, 0,1\}$ and consider the following inductive construction:
\begin{itemize} 
\item[($S_1$)] Put $\Phi_1=\Phi\cup\{g_1\}$. 
Each  regular complex $\Delta_1$ linearizing $\Phi_1$ also linearizes $\Phi$. Since $\beta$ is strictly coherent,  Theorem \ref{thm:strictFree} yields a $\Delta_1$-faithful state $s_1$ which extends $\beta$. It follows that the extended book $\beta_1=\beta\cup\{g_1\mapsto s_1(g_1)\}$ is strictly coherent because $s_1$ extends it. Further,  $0<s_1(g_1)<1$.

\item[($S_2$)] Consider $\Phi_2=\Phi_1\cup\{g_2\}$ and fix $\Delta_2$ that linearizes $\Phi_2$ and a $\Delta_2$-faithful state $s_2$ which extends $\beta_1$. Again, $0<s_2(g_2)<1$ and $\beta_2=\beta_1\cup\{g_2\mapsto s_2(g_2)\}$ is strictly coherent by Theorem \ref{thm:strictFree}. Further, $s_2(e)=s_1(e)$ for all $e\in \Phi_1$.

\item[($S_{i+1}$)] At step $i+1$, arguing by induction, construct a regular complex $\Delta_{i+1}$ which linearizes $\Phi_i\cup\{g_{i+1}\}=\Phi\cup\{g_1,\ldots, g_{i+1}\}$, a state $s_{i+1}$ of $\free_n$ which is $\Delta_{i+1}$-faithful and a strictly coherent book $\beta_{i+1}=\beta_i\cup\{g_{i+1}\mapsto s_{i+1}(g_{i+1})\}$.
\end{itemize}
For each $n$, the state $s_n$ agrees with $s_{n-1}$ over $\Phi_{n-1}$. Thus, for all $n_0$ and for all $n>n_0$, $s_n(g_{n_0})$ always attains the same value. In particular, for all $n,m\in \mathbb{N}$, $s_n(f)=s_m(f)$ for all $f\in \Phi$.


\begin{thm}\label{main2}
Let $\Phi$ be a finite subset of $\free_n$ and let $\beta$ be a book on $\Phi$. Then the following conditions are equivalent:
\begin{enumerate}
\item $\beta$ is strictly coherent;
\item $\beta$ extends to a faithful state of $\free_n$.
\end{enumerate}
\end{thm}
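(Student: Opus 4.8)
The plan is to prove both implications, reusing the machinery already in place. The direction $(2)\Rightarrow(1)$ is easy: if $s$ is a faithful state of $\free_n$ extending $\beta$, then in particular $s$ is a state, so $\beta$ is coherent; moreover, for any regular complex $\Delta$ linearizing $\Phi$ and any vertex $v\in V(\Delta)$, the Schauder hat $\hat h_v$ is nonzero, hence $s(\hat h_v)>0$ because $s$ is faithful. Thus $s$ is $\Delta$-faithful, and Theorem~\ref{thm:strictFree} $((2)\Rightarrow(1))$ gives that $\beta$ is strictly coherent.

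For the harder direction $(1)\Rightarrow(2)$, I would run the inductive construction displayed just before the theorem statement. Starting from a strictly coherent book $\beta$ on $\Phi$, fix an enumeration $g_1,g_2,\dots$ of $\free_n\setminus(\Phi\cup\{0,1\})$ and build the chain of strictly coherent books $\beta\subseteq\beta_1\subseteq\beta_2\subseteq\cdots$ together with the $\Delta_i$-faithful states $s_i$, exactly as in steps $(S_1),(S_2),\dots,(S_{i+1})$. Because $s_{i+1}$ agrees with $s_i$ on $\Phi_i$, the pointwise limit $s(f)=\lim_i s_i(f)=s_{n_f}(f)$ (where $n_f$ is any index such that $f\in\Phi_{n_f}$) is well defined on all of $\free_n$. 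I then need to check three things: (a) $s$ is a state, i.e.\ it satisfies $(s1)$ and $(s2)$ — this is immediate since $s$ coincides locally with the states $s_i$, and both normalization and additivity of $a\oplus b$ for $a\odot b=0$ are equations in finitely many elements, each of which lies in some common $\Phi_i$; (b) $s$ extends $\beta$ — this is the last sentence of the construction, $s_n(f)=\beta(f)$ for all $f\in\Phi$; and (c) $s$ is faithful. For (c), take any $g\in\free_n$ with $g\neq0$. If $g\in\Phi$, then strict coherence of $\beta$ forces $\beta(g)>0$ by Corollary~\ref{cor:decidable}~(2) (the convex combination over the strictly positive $\lambda(v_i)$ of the values $g(v_i)$, at least one of which is positive since $g\neq 0$, is strictly positive); otherwise $g=g_j$ for some $j$, and then $s(g_j)=s_j(g_j)>0$ by the construction (step $(S_j)$ guarantees $0<s_j(g_j)<1$).

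The main obstacle is making the limit argument airtight: one must be sure that the value $s(g_j)$ assigned at step $j$ is never revised at later steps. This is precisely the coherence condition "$s_n$ agrees with $s_{n-1}$ over $\Phi_{n-1}$" noted in the construction, which in turn rests on the fact that at step $i+1$ we choose a $\Delta_{i+1}$-faithful state \emph{extending $\beta_i$} — so it reproduces all previously committed values. I would spell this out by an easy induction: $s_{i+1}\restriction\Phi_i=\beta_i\restriction\Phi_i=s_i\restriction\Phi_i$, the middle equality holding because $\beta_i$ was itself defined to record the $s_i$-values. Given this, the map $s$ is unambiguous, and the verification of $(s1),(s2)$ and faithfulness goes through as sketched. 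One small point worth stating explicitly: each $g_j$ eventually enters some $\Phi_i$, so $s$ is genuinely defined on the \emph{whole} algebra $\free_n$, not just on an increasing union that might miss elements — which holds because the enumeration exhausts $\free_n\setminus(\Phi\cup\{0,1\})$ and the values on $0,1$ are forced by $(s1)$ and additivity.
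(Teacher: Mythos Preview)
Your proposal is correct and follows essentially the same route as the paper's own proof: you run the inductive construction $(S_1),(S_2),\dots$ to obtain an eventually-constant sequence $(s_i)$, take the pointwise limit, and verify that it is a state, extends $\beta$, and is faithful---your faithfulness argument is precisely the paper's ``Proof~2''. The only cosmetic differences are that you spell out $(2)\Rightarrow(1)$ via $\Delta$-faithfulness and Theorem~\ref{thm:strictFree} (the paper simply calls this direction trivial), and you justify the case $g\in\Phi$ in the faithfulness check explicitly via Corollary~\ref{cor:decidable}~(2), where the paper just writes ``there is nothing to prove''.
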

\begin{proof}
The direction $(2) \Rightarrow (1)$ is trivial. Thus, let $\beta$ be strictly coherent and fix an enumeration $g_1,g_2, \ldots$ of $\free_n\setminus\{\Phi, 0,1\}$.  The  construction above determines subsets  $\Phi=\Phi_0\subseteq\Phi_1\subseteq \Phi_2\subseteq\ldots$ of $\free_n$ and a sequence $\{s_i\}_{i\geq 1}$ of states of $\free_n$ such that:
\begin{itemize}
\item[(i)] Each $s_i$ is a $\Delta_i$-faithful state of $\free_n$;
\item[(ii)] For all $n>m$, $s_m(e)=s_n(e)$ for all $e\in \Phi_m$. 
\end{itemize}
 By construction of the $\Phi_i$'s, for every $f\in \free_n$ there exists an $m\geq 0$ such that $f\in \Phi_m$ and hence, by (ii), $s_m(f)=s_n(f)$ for all $n>m$. Therefore, $\{s_i(f)\}_{i\geq 0}$ is a Cauchy sequence. This gives that $\{s_i\}_{i\geq 0}$ is pointwise convergent. Define $s:\free_n\to[0,1]$ as follows: for each $f\in \free_n$,
$$
s(f)=\lim_{i\to\infty} s_i(f).
$$
Let us prove that $s$ is a state. Clearly $s(1)=1$. If $a\oplus b=0$ then, for all $i\geq 0$, $s_i(a\oplus b)=s_i(a)+s_i(b)$ and by the continuity of $+$, $s(a\oplus b)=\lim_{i\to\infty}s_i(a\oplus b)=\lim_{i\to\infty}s_i(a)+s_i(b)=\lim_{i\to\infty}s_i(a)+\lim_{i\to\infty}s_i(b)=s(a)+s(b)$. By construction, $s$ extends $\beta$ since so does each $s_n$. There remains to be proved that $s$ is faithful. We will provide two proofs of this fact.
\vspace{.2cm}

\noindent(Proof 1). By \cite[Theorem 5.2]{FHM18} $s$ is faithful iff for each finite subset $\Psi$ of $\free_n$ the restriction of $s$ to $\Psi$ is strictly coherent. Recalling the above construction, let $i_0$ be the minimum index such that 
$\Psi\subseteq\Phi_{i_0}$. Thus, the restriction of $s$ to $\Psi$ coincides with the restriction of $s_{i_0}$ to $\Psi$ and the restriction of $s_{i_0}$ to $\Phi_{i_0}$ is strictly coherent. The claim immediately follows because strict coherence is preserved for books contained in a strictly coherent one.
\vspace{.2cm}

\noindent(Proof 2). Let $1>f>0$. If $f\in \Phi$ there is nothing to prove. Conversely, assume that $f=g_{i}$ for some $i$. Therefore, for all $j\geq i$, $s_j(f)=\alpha>0$. Thus, $s(f)=\lim_{i\to\infty}s_i(f)=\alpha>0$ and the claim is settled.
\end{proof}

\section{Coherence, strict coherence and provability in \luk\ logic.}\label{sec:log}
Propositional \luk\ logic ($\L$ in symbols) is the logical calculus having MV-algebras as its equivalent algebraic semantics. Formulas of \luk\ logic will be denoted by lower case Greek letter and $\L(m)$ will stand for the set of formulas in a language with $m$ propositional variables. A complete axiomatization of $\L$ can be found in \cite[Definition 4.3.1]{CDM}. A formula $\varphi$ is said to be a {\em theorem}, in symbols $\vdash\varphi$, if $\varphi$ can be deduced from the axioms of $\L$ and by its unique rule of modus ponens. A {\em theory} $\Theta$ is a deductively closed set of formulas. 
A theory $\Theta$ of $\L(m)$ is said to be {\em finitely axiomatizable} if for some (necessarily satisfiable) formula $\theta\in \L(m)$, $\Theta$ is the smallest theory of $\L(m)$ which contains $\theta$.

By Proposition \ref{prop1}, valuations of the \luk\ language $\L(m)$ are in one-one correspondence with homomorphisms of $\free_m$ to $[0,1]_{MV}$ as well as with points of the $m$-cube $[0,1]^m$. Thus, a formula $\varphi\in \L(m)$ is a {\em tautology} if   $h(\varphi)=1$ for all homomorphisms $h:\free_m\to[0,1]_{MV}$ iff the oneset of $f_\varphi$ coincides with $[0,1]^m$, where $f_\varphi$ is the unique McNaughton function determined by $\varphi$ \cite{MuConstructive}.

For every $\mathscr{X}\subseteq[0,1]^m$ and theory $\Theta$ we write
$$
{\rm Th}(\mathscr{X})=\{\psi\in \L(m)\mid (\forall x\in \mathscr{X})\;f_\psi(x)=1\}
$$
and 
$$
{\rm Mod}(\Theta)=\{x\in [0,1]^m\mid (\forall \psi\in \Theta)\; f_\psi(x)=1\}.
$$
Given two (not necessarily finitely axiomatizable) theories $\Theta_1$ and $\Theta_2$, we  write $\Theta_1\models\Theta_2$, if ${\rm Mod}(\Theta_1)\subseteq{\rm Mod}(\Theta_2)$. 

Following \cite[Definition 3.9]{Mu12}, two rational polyhedra $\mathscr{P}$ and $\mathscr{Q}$ of $[0,1]^m$ are said to be {\em $\mathbb{Z}$-homeomorphic} (in symbols, $\mathscr{P}\cong_{\mathbb{Z}}\mathscr{Q}$) if there exists a homeomorphism $\eta:\mathscr{P}\to\mathscr{Q}$ such that both $\eta$ and $\eta^{-1}$, as maps from $\mathbb{R}^m\to\mathbb{R}^m$ are {\em $\mathbb{Z}$-maps}, i.e., $\eta$ and $\eta^{-1}$ are piecewise linear with integer coefficients.
\begin{lem}[{\cite[Theorem 3.20]{Mu12}}]\label{lemmaMund1}
For every $m=1,2,\ldots$, the pair $({\rm Th}, {\rm Mod})$ establishes a Galois connection between rational polyhedra of $[0,1]^m$ and finitely axiomatizable theories of $\L(m)$. In particular:
\begin{enumerate}
\item For every finitely axiomatizable theory $\Theta$ of $\L(m)$, there exists a unique rational polyhedron $\mathscr{P}_\Theta$ of $[0,1]^m$ such that ${\rm Mod}({\rm Th}(\Theta))\cong_\mathbb{Z}\mathscr{P}_\Theta$.
\item For each rational polyhedron $\mathscr{P}$ of $[0,1]^m$ there exists a unique finitely axiomatizable theory $\Theta_\mathscr{P}$ such that ${\rm Mod}({\rm Th}(\Theta_\mathscr{P}))\cong_\mathbb{Z}\mathscr{P}$.
\item For $\mathscr{P}_1$ and $\mathscr{P}_2$ rational polyhedra, $\mathscr{P}_1\subseteq \mathscr{P}_2$
iff $\Mod(\Theta_{\mathscr{P}_1})\subseteq\Mod(\Theta_{\mathscr{P}_2})$
iff $\Theta_{\mathscr{P}_1}\models\Theta_{\mathscr{P}_2}$.
\end{enumerate}
\end{lem}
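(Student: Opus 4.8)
The plan is to reduce the lemma to three well-known ingredients: McNaughton's representation of $\free_m$, the finite strong completeness of \luk\ logic, and the duality between finitely presented MV-algebras and rational polyhedra with $\mathbb{Z}$-maps. First I would fix $m$ and work inside the Lindenbaum algebra $\free_m$, identifying each formula $\psi\in\L(m)$ with its McNaughton function $f_\psi$. A finitely axiomatizable theory $\Theta$ is, by definition, the least theory containing a single satisfiable formula $\theta$, and then $\Mod(\Theta)$ equals the oneset of $f_\theta$ (by soundness, $\theta\vdash\psi$ forces $f_\psi$ to take value $1$ wherever $f_\theta$ does); this oneset is a rational polyhedron, being a subcomplex of any regular complex linearizing $f_\theta$. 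This yields the assignment $\Theta\mapsto\mathscr{P}_\Theta:=\Mod(\Theta)$ of item (1). For item (2) I would go the other way: starting from a rational polyhedron $\mathscr{P}\subseteq[0,1]^m$, use \cite[Corollary 2.10]{Mu12} to produce $f\in\free_m$ with oneset $\mathscr{P}$ and a formula $\theta$ with $f_\theta=f$, and then show that $\Th(\mathscr{P})$ coincides with the theory generated by $\theta$. The inclusion $\supseteq$ is immediate, and $\subseteq$ says that whenever $f_\psi\equiv 1$ on the oneset of $f_\theta$ (i.e.\ $\theta\models\psi$) one has $\theta\vdash\psi$; this is precisely the finite strong completeness theorem for \luk\ logic. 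Hence $\Theta_\mathscr{P}:=\Th(\mathscr{P})$ is finitely axiomatizable and $\Mod(\Theta_\mathscr{P})=\mathscr{P}$.

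Next I would record the adjunction $\mathscr{X}\subseteq\Mod(\Theta)$ iff $\Theta\subseteq\Th(\mathscr{X})$, which is immediate from the definitions, so $(\Th,\Mod)$ is an antitone Galois connection and $\Mod\circ\Th$, $\Th\circ\Mod$ are monotone closure operators. The two constructions above say exactly that $\Mod\circ\Th$ restricts to the identity on rational polyhedra of $[0,1]^m$ and that the $\Th\circ\Mod$-closed theories arising from rational polyhedra are precisely the finitely axiomatizable ones; so $\Theta\mapsto\mathscr{P}_\Theta$ and $\mathscr{P}\mapsto\Theta_\mathscr{P}$ are mutually inverse between these two classes. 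Item (3) is then formal: since $\Mod(\Theta_{\mathscr{P}_i})=\mathscr{P}_i$, the condition $\Mod(\Theta_{\mathscr{P}_1})\subseteq\Mod(\Theta_{\mathscr{P}_2})$ reads literally as $\mathscr{P}_1\subseteq\mathscr{P}_2$, and it also reads literally as $\Theta_{\mathscr{P}_1}\models\Theta_{\mathscr{P}_2}$ by the definition of $\models$ recalled just before the lemma.

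The step I expect to be the genuine obstacle is the uniqueness clauses of (1) and (2), stated up to $\mathbb{Z}$-homeomorphism (respectively, up to logical isomorphism of theories) rather than up to equality. Here I would route the argument through the quotient $\free_m/I_{\mathscr{P}}$, which by the description recalled in Section \ref{sec:introAlg} is the MV-algebra of restrictions to $\mathscr{P}$ of McNaughton functions, hence a finitely presented MV-algebra; using finite strong completeness once more, it is exactly the Lindenbaum algebra of the theory $\Theta_\mathscr{P}$. Then I would invoke the Baker--Beynon / Marra--Spada duality: two finitely presented MV-algebras are isomorphic precisely when the underlying rational polyhedra are $\mathbb{Z}$-homeomorphic, and isomorphism of Lindenbaum algebras is exactly the equivalence modulo which a finitely axiomatizable theory is determined. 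Combined with the bijection of the second paragraph, this gives that $\mathscr{P}_\Theta$ is determined by $\Theta$ up to $\cong_\mathbb{Z}$ and $\Theta_\mathscr{P}$ by $\mathscr{P}$ up to logical isomorphism, completing the Galois-connection statement. All the rest is the formal bookkeeping of an antitone Galois connection together with the classical finite strong completeness theorem, so the duality input is where the real content sits.
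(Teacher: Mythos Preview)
The paper does not give its own proof of this lemma: it is stated with the citation \cite[Theorem 3.20]{Mu12} and no proof environment follows. So there is nothing in the paper to compare your argument against beyond the bare reference.

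That said, your sketch is essentially the standard route to the cited result and is correct in outline: identify $\Mod(\Theta)$ with the oneset of an axiomatizing formula (a rational polyhedron), go back via \cite[Corollary 2.10]{Mu12} plus finite strong completeness to show $\Th(\mathscr{P})$ is finitely axiomatizable with $\Mod(\Th(\mathscr{P}))=\mathscr{P}$, and read off (3) from $\Mod(\Theta_{\mathscr{P}})=\mathscr{P}$ together with the definition of $\models$. Your use of the finitely-presented/rational-polyhedron duality to handle the ``uniqueness up to $\cong_{\mathbb{Z}}$'' clauses is also the right move. One small remark: in items (1)--(2) you actually obtain the stronger conclusion $\Mod(\Theta_\mathscr{P})=\mathscr{P}$ on the nose, so the $\cong_{\mathbb{Z}}$ in the lemma's statement is only doing work when one passes to the associated Lindenbaum quotients; you might make explicit that the ``unique'' in the statement is uniqueness of the $\cong_{\mathbb{Z}}$-class, not of the polyhedron as a point-set.
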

In the rest of this section we will adopt the notation used in Lemma \ref{lemmaMund1} above with the following  exception: if $x\in ([0,1]\cap\mathbb{Q})^m$, we denote by $\Theta_x$ the finitely axiomatizable theory $\Theta_{\{x\}}$.

Let $\Phi$ be a subset of $\free_n$ of finite cardinality $k$ and let $\beta$ be a rational-valued book on $\Phi$. As noted at the beginning of Section \ref{sec:geoStrict}, $\{\beta\}$, $\mathscr{C}_\Phi$ and $\relb\;\mathscr{C}_\Phi$ are rational polyhedra of $[0,1]^k$. By Lemma \ref{lemmaMund1}, $\Theta_\beta$, $\Theta_{\mathscr{C}_\Phi}$ and $\Theta_{(\relb\;\mathscr{C}_\Phi)}$ are finitely axiomatizable.

The following lemma provides a first characterization of coherence and strict coherence in terms of deducibility. 
\begin{lem}\label{lemmaMain2}
Let $\Phi$ be a finite subset of $\free_n$ and let $\beta$ be a rational-valued book on $\Phi$. Thus the following conditions hold:
\begin{enumerate}
\item $\beta$ is coherent iff  $\Theta_\beta\models \Theta_{\mathscr{C}_\Phi}$.
\item $\beta$ is strictly coherent iff $\Theta_\beta\models \Theta_{\mathscr{C}_\Phi}$ and $\Theta_\beta\not\models \Theta_{(\relb\;\mathscr{C}_\Phi)}$.
\end{enumerate}
\end{lem}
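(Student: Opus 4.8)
The plan is to reduce both statements to inclusions between rational polyhedra of $[0,1]^k$ and then to apply the Galois connection of Lemma \ref{lemmaMund1}. By Mundici's extension of de Finetti's theorem (\cite[Theorem 2.1]{MuBookmaking}) we have $\mathscr{D}_\Phi=\mathscr{C}_\Phi$, so $\beta$ is coherent exactly when the point $\beta\in[0,1]^k$ belongs to $\mathscr{C}_\Phi$; since $\beta$ is rational-valued, $\{\beta\}$ is itself a rational polyhedron (a rational $0$-simplex). Hence $\beta$ is coherent iff $\{\beta\}\subseteq\mathscr{C}_\Phi$, and by Lemma \ref{lemmaMund1}(3) this inclusion is equivalent to $\Theta_\beta\models\Theta_{\mathscr{C}_\Phi}$. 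This proves item (1).

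For item (2) I would invoke Theorem \ref{thm:MainGeo}, which yields that $\beta$ is strictly coherent iff $\beta\in\relint\;\mathscr{D}_\Phi=\relint\;\mathscr{C}_\Phi$. Because $\mathscr{C}_\Phi$ is a closed polytope, one has the disjoint decomposition $\mathscr{C}_\Phi=\relint\;\mathscr{C}_\Phi\cup\relb\;\mathscr{C}_\Phi$, so $\beta\in\relint\;\mathscr{C}_\Phi$ is equivalent to the conjunction of $\beta\in\mathscr{C}_\Phi$ and $\beta\notin\relb\;\mathscr{C}_\Phi$. The first conjunct translates, exactly as in the proof of (1), to $\Theta_\beta\models\Theta_{\mathscr{C}_\Phi}$. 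For the second, $\beta\notin\relb\;\mathscr{C}_\Phi$ says precisely that $\{\beta\}\not\subseteq\relb\;\mathscr{C}_\Phi$, and since $\relb\;\mathscr{C}_\Phi$ is a rational polyhedron, Lemma \ref{lemmaMund1}(3), read contrapositively, turns this into $\Theta_\beta\not\models\Theta_{(\relb\;\mathscr{C}_\Phi)}$. Conjoining the two equivalences gives item (2).

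The steps are routine once the geometric characterizations are available; the closest thing to an obstacle is merely checking that the objects involved are genuine rational polyhedra of $[0,1]^k$, so that Lemma \ref{lemmaMund1} applies. For $\mathscr{C}_\Phi$ this follows from Corollary \ref{corollary1} together with the observation that a McNaughton function takes a rational value at each rational point, so $\mathscr{C}_\Phi=\conv\{\langle f_1(v),\ldots,f_k(v)\rangle\mid v\in V(\Delta)\}$ is the convex hull of finitely many rational points; and $\relb\;\mathscr{C}_\Phi$, being the union of the proper faces of a rational polytope, is again a rational polyhedron, while a rational singleton $\{\beta\}$ is a rational $0$-simplex and thus lies within the scope of the lemma. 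One should also dispatch the degenerate case in which $\mathscr{C}_\Phi$ is a single rational point: there $\relb\;\mathscr{C}_\Phi$ is empty, $\Theta_\beta\not\models\Theta_{(\relb\;\mathscr{C}_\Phi)}$ holds vacuously, and both sides of the equivalence in (2) reduce to $\beta$ being the unique coherent book, in agreement with Theorem \ref{thm:MainGeo}.
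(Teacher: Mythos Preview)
Your proof is correct and follows essentially the same route as the paper: both reduce coherence to $\{\beta\}\subseteq\mathscr{C}_\Phi$ via \cite[Theorem 2.1]{MuBookmaking}, reduce strict coherence to $\beta\in\mathscr{C}_\Phi\setminus\relb\;\mathscr{C}_\Phi$ via Theorem \ref{thm:MainGeo}, and then translate each set-theoretic inclusion through Lemma \ref{lemmaMund1}(3). Your version is slightly more explicit in verifying that $\{\beta\}$, $\mathscr{C}_\Phi$ and $\relb\;\mathscr{C}_\Phi$ are rational polyhedra and in handling the degenerate case where $\mathscr{C}_\Phi$ is a singleton, but the argument is the same.
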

\begin{proof}
(1)  follows from \cite[Theorem 2.1]{MuBookmaking}, Lemma \ref{lemmaMund1} and the definition of $\models$. Indeed, $\beta$ is coherent iff $\beta\in \mathscr{C}_\Phi$ iff  $\{\beta\}\subseteq\mathscr{C}_\Phi$.

As for (2), Theorem \ref{thm:MainGeo} shows that $\beta$ is strictly coherent iff $\beta\in \relint \mathscr{C}_\Phi=\mathscr{C}_\Phi\setminus\relb\;\mathscr{C}_\Theta$ iff $\beta\in \mathscr{C}_\Phi$ and $\beta\not \in\relb\;\mathscr{C}_\Theta$ iff $\Theta_\beta\models \Theta_{\mathscr{C}_\Phi}$ (by (1) above) and $\{\beta\}\not\subseteq\relb\;\mathscr{C}_\Theta$. By Lemma \ref{lemmaMund1} (3) this condition is equivalent to $\Theta_\beta\not\models\Theta_{(\relb\;\mathscr{C}_\Phi)}$.
\end{proof}

To characterize coherence and strict coherence in terms of  provability in \luk\ logic, we prepare. 

\begin{prop}\label{propConvComp}
There exists an effective procedure $\Pi$ to compute, for each rational polytope $\mathscr{P}$ of $[0,1]^k$, a formula $\Pi_\mathscr{P}$ which axiomatizes $\Theta_\mathscr{P}$.
\end{prop}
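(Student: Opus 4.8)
The plan is to reduce the problem to producing, for each rational polytope $\mathscr{P}$, a McNaughton function whose oneset is $\mathscr{P}$, and then to invoke the effective half of Lemma \ref{lemmaMund1}. Concretely, the first step is to compute a regular complex (i.e., a unimodular triangulation) $\Delta$ of $[0,1]^k$ such that $\mathscr{P}$ is the support of a subcomplex of $\Delta$; that is, $\mathscr{P}=\bigcup\{T\in\Delta\mid T\subseteq\mathscr{P}\}$. Since $\mathscr{P}$ is given by finitely many rational vertices (or equivalently by finitely many rational linear inequalities), the required $\Delta$ can be produced by a Turing machine: one triangulates $[0,1]^k$ compatibly with the finitely many hyperplanes bounding $\mathscr{P}$, and then applies the effective desingularization (blow-up) procedure that turns a rational triangulation into a regular one — exactly the algorithm underlying \cite[Theorem 7.1, Claim 3]{MuBookmaking}, already used in the proof of Corollary \ref{cor:decidable}.

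The second step is to write down the formula. Having $\Delta$ and its vertex set $V(\Delta)$, let $v_{i_1},\ldots,v_{i_l}$ be those vertices of $\Delta$ lying in $\mathscr{P}$ and form the McNaughton function
$$
p=\bigoplus_{v_j\in \Delta\cap\mathscr{P}}\hat{h}_j
$$
of Lemma \ref{lemmaSchauder} (4), whose oneset is precisely $\mathscr{P}$. Each normalized Schauder hat $\hat{h}_j$ is explicitly computable from $\Delta$ (its linear pieces have integer coefficients determined by the regular simplexes incident to $v_j$), and by McNaughton's constructive theorem \cite{MuConstructive} one can algorithmically produce a \luk\ formula $\pi_\mathscr{P}$ with $f_{\pi_\mathscr{P}}=p$. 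The final step is to set $\Pi_\mathscr{P}:=\pi_\mathscr{P}$ and verify that it axiomatizes $\Theta_\mathscr{P}$: the theory generated by $\pi_\mathscr{P}$ has model set $\Mod(\pi_\mathscr{P})=\{x\in[0,1]^k\mid f_{\pi_\mathscr{P}}(x)=1\}=\mathscr{P}$, and since $\mathscr{P}$ is already a rational polyhedron we have $\Mod(\Th(\Theta_{\pi_\mathscr{P}}))\cong_{\mathbb{Z}}\mathscr{P}$, so by the uniqueness clause of Lemma \ref{lemmaMund1} (2) the finitely axiomatizable theory so obtained coincides with $\Theta_\mathscr{P}$.

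The main obstacle is the effectivity of the first step, namely obtaining a \emph{regular} (not merely rational) complex linearizing the defining data of $\mathscr{P}$: producing a rational triangulation compatible with finitely many rational hyperplanes is routine, but making it unimodular requires iterated stellar subdivisions, and one must argue that this halts and stays rational over $[0,1]^k$. I would handle this by citing the desingularization algorithm as packaged in \cite[Theorem 7.1]{MuBookmaking} (and \cite{Mu12,Mu11}), observing that every object involved — the bounding hyperplanes of $\mathscr{P}$, the vertices of the triangulation, the denominators of the Schauder hats — is a rational quantity computed from $\mathscr{P}$, so that the whole pipeline $\mathscr{P}\mapsto\Delta\mapsto p\mapsto\pi_\mathscr{P}$ is carried out by a single Turing machine; composing these computable steps yields the desired procedure $\Pi$.
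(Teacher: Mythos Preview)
Your proposal is correct and follows essentially the same approach as the paper: compute a regular complex adapted to $\mathscr{P}$, take the $\oplus$-sum of the normalized Schauder hats at the vertices lying in $\mathscr{P}$ (exactly the function of Lemma~\ref{lemmaSchauder}\,(4)), and invoke \cite{MuConstructive} to produce the axiomatizing formula. Your version is somewhat more explicit about the effectivity of the triangulation step and about $\Delta$ being a complex of the whole cube with $\mathscr{P}$ as a subcomplex, but the argument is the same.
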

\begin{proof}
First, compute a regular complex $\Delta$ supporting $\mathscr{P}$ (see \cite[Chapter  6.2.2. and Theorem 6.5]{Desiderata}).
Notice that, $\ext \; \mathscr{P}\subseteq V(\Delta)$ and let  $\hat{h}_1, \ldots, \hat{h}_q$ be the normalized Schauder hats at the vertices $v_1,\ldots, v_q$ of $\Delta$. For $j=1,\ldots, q$ let $\Pi_j$ be the \luk\ formulas computed from $\hat{h}_j$ (see \cite{MuConstructive}). Let further, 
$$
\Pi_\mathscr{P}=\bigoplus_{j=1}^q \Pi_j. 
$$
Since each $\hat{h}_j$ is a member of $\L(k)$, $\Pi_\mathscr{P}$ belongs to $\L(k)$. There remains to be proved that $\Pi_\mathscr{P}$ axiomatizes $\Theta_\mathscr{P}$. To this purpose, let us prove that 
\begin{center}
$x\in \mathscr{P}$ iff $h_x(\Pi_\mathscr{P})=1$.
\end{center}
As a matter of fact, by Lemma \ref{lemmaSchauder} (3) the oneset of the McNaughton function $\Pi_\mathscr{P}$ is $\mathscr{P}$. Thus the claim is settled.
\end{proof}
\begin{cor}\label{lemmaCompute}
There exists an effective procedure $\Pi$ which computes, for each  $\Phi=\{f_1,\ldots, f_k\}\subseteq \free_n$  and for each $\beta\in [0,1]^k$,  formulas $\Pi_\Phi$, $\Pi_{(\relb\; \Phi)}$ and $\Pi_\beta$ of $\L(k)$ which respectively axiomatize $\Theta_{\mathscr{C}_\Phi}$, $\Theta_{(\relb\; \mathscr{C}_\Phi)}$ and $\Theta_\beta$. 
\end{cor}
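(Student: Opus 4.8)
The plan is to reduce everything to Proposition \ref{propConvComp}, which already provides an effective procedure $\Pi$ that, given a rational polytope $\mathscr{P}$ of $[0,1]^k$, outputs a formula $\Pi_\mathscr{P}$ axiomatizing $\Theta_\mathscr{P}$. So the only real work is to exhibit, from the data $\Phi=\{f_1,\dots,f_k\}\subseteq\free_n$ and $\beta\in([0,1]\cap\mathbb{Q})^k$, the three relevant rational polytopes as explicit inputs to $\Pi$, in a way that is itself computable. First I would handle $\mathscr{C}_\Phi$: invoke \cite[Theorem 7.1, Claim 3]{MuBookmaking} (as already used in the proof of Corollary \ref{cor:decidable}) to compute a regular complex $\Delta$ of $[0,1]^n$ linearizing all the $f_i$, list its vertices $V(\Delta)=\{v_1,\dots,v_t\}$, and form the finite set of rational points $\{\langle f_1(v_j),\dots,f_k(v_j)\rangle\mid j=1,\dots,t\}\subseteq([0,1]\cap\mathbb{Q})^k$; by Corollary \ref{corollary1} its convex hull is exactly $\mathscr{D}_\Phi=\mathscr{C}_\Phi$, so this finite list of rational vertices is a legitimate presentation of the rational polytope $\mathscr{C}_\Phi$, and feeding it to $\Pi$ yields $\Pi_\Phi$ axiomatizing $\Theta_{\mathscr{C}_\Phi}$.

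Next I would obtain $\relb\;\mathscr{C}_\Phi$ as a rational polyhedron (indeed a finite union of rational polytopes — its proper faces) by a routine computation on the presentation of $\mathscr{C}_\Phi$: compute the affine hull of $\mathscr{C}_\Phi$, compute a finite irredundant list of supporting hyperplanes cutting out the facets, and take the union of the facets; each facet is again the convex hull of finitely many of the already-computed rational vertices, hence rational, and $\relb\;\mathscr{C}_\Phi$ is their union. Strictly speaking Proposition \ref{propConvComp} is stated for a single rational polytope, so here I would either note that $\Pi$ extends verbatim to a finite union of rational polytopes (compute a regular complex supporting the union, take $\Pi_{(\relb\;\Phi)}=\bigoplus_j\Pi_j$ over the Schauder hats at vertices lying in $\relb\;\mathscr{C}_\Phi$, and invoke Lemma \ref{lemmaSchauder}(4) in place of Lemma \ref{lemmaSchauder}(3) to see its oneset is $\relb\;\mathscr{C}_\Phi$), or — more cleanly — apply $\Pi$ to each facet separately, obtaining $\Pi^{(i)}$ axiomatizing the $i$-th facet, and set $\Pi_{(\relb\;\Phi)}$ to be a formula axiomatizing the union of the facets' model sets (a disjunction at the level of onesets, realized by a suitable combination of the $\Pi^{(i)}$ via Lemma \ref{lemmaSchauder}). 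Finally, $\Pi_\beta$ is the easiest case: $\{\beta\}$ is a rational polytope (a single rational point), so $\Pi_\beta=\Pi_{\{\beta\}}$ is produced directly by Proposition \ref{propConvComp}, and by Lemma \ref{lemmaMund1} it axiomatizes $\Theta_\beta$.

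The only delicate point — and the one I would expect to need the most care — is the computability of the relative boundary: one must check that passing from a list of rational vertices of a polytope to a presentation (as a rational polyhedron) of its relative boundary is an effective operation, i.e. that one can algorithmically compute the affine hull, detect the facets, and recognize which of the original vertices lie on $\relb\;\mathscr{C}_\Phi$. This is standard in computational polytope theory (facet enumeration from vertex data, e.g. via the double description method), but since the statement asserts a genuine effective procedure it should be at least pointed out; once this is granted, the rest is an immediate assembly of Proposition \ref{propConvComp}, Corollary \ref{corollary1}, and Lemma \ref{lemmaSchauder}. I would close by remarking that the equivalences in Lemma \ref{lemmaMain2} then read purely syntactically: $\beta$ is coherent iff $\Pi_\beta\vdash\Pi_\Phi$, and $\beta$ is strictly coherent iff $\Pi_\beta\vdash\Pi_\Phi$ while $\Pi_\beta\not\vdash\Pi_{(\relb\;\Phi)}$.
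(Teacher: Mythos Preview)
Your proposal is correct and follows essentially the same approach as the paper: apply Proposition~\ref{propConvComp} directly to the rational polytopes $\mathscr{C}_\Phi$ and $\{\beta\}$, and handle $\relb\;\mathscr{C}_\Phi$ by decomposing it into its proper faces $F_1,\dots,F_l$, applying Proposition~\ref{propConvComp} to each $F_i$, and combining the resulting formulas by the lattice join $\Pi_{(\relb\;\Phi)}=\bigvee_{i=1}^l \Pi_i$ (exactly your ``second, cleaner'' alternative). The paper is terser---it does not spell out how $\mathscr{C}_\Phi$ is presented via a linearizing complex or discuss facet enumeration---but your additional computability remarks are compatible with, and indeed fill in details glossed over in, the paper's argument.
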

\begin{proof}
As the reader will recall from Section \ref{sec:geoStrict}, $\mathscr{C}_\Phi$ is a polytope. Thus, $\mathscr{C}_\Phi$, $\{\beta\}$ are rational polytopes of $[0,1]^k$, whence $\Pi_\Phi$ and $\Pi_\beta$ are computed as in Proposition \ref{propConvComp}. 

As for $\Pi_{(\relb\;\Phi)}$,  $\relb\; \mathscr{C}_\Phi$ is not convex. However, it can be realized as the finite union of the faces $F_1,\ldots, F_l$ of $\mathscr{C}_\Phi$. Each face $F_i$ is a rational  polytope, whence Proposition \ref{propConvComp} yields \luk\ formulas $\Pi_1,\ldots, \Pi_l$ such that $x\in F_i$ iff $h_x(\Pi_i)=1$. Thus, let 
$$
\Pi_{(\relb\;\Phi)}=\bigvee_{i=1}^l\Pi_i.
$$
Finally, $x\in \relb\;\mathscr{C}_\Phi$ iff exists $i=1,\ldots, n$ such that $x\in F_i$ iff $h_x(F_i)=1$ iff $h_x(\Pi_{(\relb\;\Phi)})=1$. 
\end{proof}

In the light of the above corollary, we may write $\Pi_\Phi$, $\Pi_{(\relb\;\Phi)}$ and $\Pi_\beta$ without danger of confusion. In the following characterization, for every \luk\ formula $\psi$, we write $\psi^n$  for $\psi\odot\ldots\odot\psi$ ($n$-times).
\begin{thm}\label{thm:logic}
Let $\Phi$ be a finite set of $\free_n$ and let $\beta$ be a book on $\Phi$. Then the following conditions hold:
\begin{enumerate}
\item $\beta$ is coherent iff there exists a non-zero $n\in \mathbb{N}$ such that $\vdash (\Pi_\beta)^n\to \Pi_\Phi$.
\item $\beta$ is strictly coherent iff there exists a non-zero $n\in \mathbb{N}$ such that $\vdash (\Pi_\beta)^n\to \Pi_\Phi$ and for all non-zero $n\in \mathbb{N}$, $\not\vdash (\Pi_\beta)^n\to \Pi_{(\relb\;\Phi)}$.
\end{enumerate}
\end{thm}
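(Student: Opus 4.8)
\textbf{Proof plan for Theorem \ref{thm:logic}.}
The plan is to reduce both statements to Lemma \ref{lemmaMain2} by translating the semantic relation $\models$ into the syntactic relation $\vdash$ via a standard ``\luk\ deduction with $\odot$-powers'' argument. Recall from Corollary \ref{lemmaCompute} that $\Pi_\beta$, $\Pi_\Phi$ and $\Pi_{(\relb\;\Phi)}$ are \luk\ formulas whose onesets are exactly $\{\beta\}$, $\mathscr{C}_\Phi$ and $\relb\;\mathscr{C}_\Phi$ respectively (using also that $\beta$ is rational, so $\{\beta\}$ is a rational polytope; this is where the hypothesis on $\beta$ is used). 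The core lemma I would isolate is the following: for \luk\ formulas $\psi,\chi$ on the same variables, $\Mod(\psi)\subseteq\Mod(\chi)$ (equivalently, the oneset of $f_\psi$ is contained in the oneset of $f_\chi$) if and only if there is a nonzero $n\in\mathbb{N}$ with $\vdash \psi^n\to\chi$. This is a well-known fact about infinite-valued \luk\ logic — the ``$\odot$-power'' trick compensates for the failure of the ordinary deduction theorem — and it follows from the fact that $f_\psi$ and $f_\chi$ are McNaughton functions together with \cite[Lemma 2.2 (i)]{KroupaPoly} (or the standard local deduction theorem for $\L$), exactly the kind of argument already used in the proof of Proposition \ref{propFaith}. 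In particular $\Mod(\psi)\subseteq\Mod(\chi)$ iff $\Theta_\psi\models\Theta_\chi$ iff $\exists n\ \vdash\psi^n\to\chi$, and dually $\Mod(\psi)\not\subseteq\Mod(\chi)$ iff $\forall n\ \not\vdash\psi^n\to\chi$.

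Granting this, part (1) is immediate: by Lemma \ref{lemmaMain2} (1), $\beta$ is coherent iff $\Theta_\beta\models\Theta_{\mathscr{C}_\Phi}$; since $\Pi_\beta$ axiomatizes $\Theta_\beta$ and $\Pi_\Phi$ axiomatizes $\Theta_{\mathscr{C}_\Phi}$, this is $\Mod(\Pi_\beta)\subseteq\Mod(\Pi_\Phi)$, which by the core lemma is exactly the existence of a nonzero $n$ with $\vdash(\Pi_\beta)^n\to\Pi_\Phi$. For part (2), Lemma \ref{lemmaMain2} (2) says $\beta$ is strictly coherent iff $\Theta_\beta\models\Theta_{\mathscr{C}_\Phi}$ and $\Theta_\beta\not\models\Theta_{(\relb\;\mathscr{C}_\Phi)}$. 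The first conjunct translates, as in part (1), to the existence of a nonzero $n$ with $\vdash(\Pi_\beta)^n\to\Pi_\Phi$. The second conjunct is $\Mod(\Pi_\beta)\not\subseteq\Mod(\Pi_{(\relb\;\Phi)})$; here one must be slightly careful because $\Pi_{(\relb\;\Phi)}$ was built in Corollary \ref{lemmaCompute} as a disjunction $\bigvee_i\Pi_i$ over the faces of $\mathscr{C}_\Phi$, so its oneset is $\relb\;\mathscr{C}_\Phi$ (not a single polytope), but the direction of the core lemma that we need — $\Mod(\psi)\not\subseteq\Mod(\chi)$ iff $\forall n\ \not\vdash\psi^n\to\chi$ — holds for any \luk\ formula $\chi$, since $\psi^n$ and $\chi$ are still McNaughton functions. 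Hence the second conjunct is: for all nonzero $n$, $\not\vdash(\Pi_\beta)^n\to\Pi_{(\relb\;\Phi)}$. Combining, $\beta$ is strictly coherent iff ($\exists n\ \vdash(\Pi_\beta)^n\to\Pi_\Phi$) and ($\forall n\ \not\vdash(\Pi_\beta)^n\to\Pi_{(\relb\;\Phi)}$), as claimed.

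The main obstacle is pinning down the core lemma cleanly and citing it correctly: one must verify that the ``$\psi^n\to\chi$'' form of consequence is the right one for \emph{finitely axiomatizable} theories in $\L(k)$, i.e.\ that $\Theta_\psi\models\Theta_\chi$ really is equivalent to $\exists n\ \vdash\psi^n\to\chi$ and not merely to $\psi\vdash_{\L}\chi$ in the local-deduction sense. This is handled by completeness of $\L$ plus the observation (used already for $f^n$ in the proof of Proposition \ref{propFaith} via \cite[Lemma 2.2 (i)]{KroupaPoly}) that for any McNaughton functions $f_\psi, f_\chi$ with $\mathrm{zeroset}(f_\psi)\supseteq\{x: f_\chi(x)<1\}^{-}$ on the relevant polyhedron there is a uniform exponent $n$ making $f_\psi^n \le f_\chi$ pointwise; equivalently the oneset containment $\{f_\psi=1\}\subseteq\{f_\chi=1\}$ forces, by compactness and piecewise-linearity, $f_\psi^n\to f_\chi$ to be a tautology for $n$ large. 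Once this is in place, everything else is a direct substitution of the axiomatizing formulas from Corollary \ref{lemmaCompute} into Lemma \ref{lemmaMain2}.
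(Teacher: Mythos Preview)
Your proposal is correct and follows essentially the same route as the paper: reduce to Lemma~\ref{lemmaMain2}, replace the theories by their axiomatizing formulas via Corollary~\ref{lemmaCompute}, then pass from $\models$ to $\vdash$ and apply the $\odot$-power form of consequence. The only difference is one of packaging: the paper simply invokes completeness together with the \luk\ local deduction theorem ($\varphi\vdash\psi$ iff $\vdash\varphi^n\to\psi$ for some $n>0$, \cite[\S4]{CDM}) as a black box, whereas you spend effort re-deriving that equivalence analytically via compactness and piecewise linearity of McNaughton functions (the \cite[Lemma~2.2~(i)]{KroupaPoly} argument); both justifications are valid, and your extra care about $\Pi_{(\relb\;\Phi)}$ being a disjunction is harmless since the deduction theorem applies to arbitrary formulas.
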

\begin{proof}
Both claims follow from Lemma \ref{lemmaMain2}, Corollary \ref{lemmaCompute}, the completeness theorem of \luk\ calculus and \luk\ deduction theorem stating that $\varphi\vdash\psi$ iff there exists a non-zero $n\in \mathbb{N}$ such that $\vdash\varphi^n\to\psi$ (see \cite[\S4]{CDM}). We will prove (1) since the proof of (2) is essentially the same.

By Lemma \ref{lemmaMain2}, $\beta$ is coherent iff $\Theta_\beta\models\Theta_{\mathscr{C}_\Phi}$ iff (from Lemma \ref{lemmaCompute}) $\Pi_\beta\models\Pi_\Phi$. The completeness theorem of \luk\ calculus shows that $\Pi_\beta\models\Pi_\Phi$ iff $\Pi_\beta\vdash\Pi_\Phi$ iff  $\vdash (\Pi_\beta)^n\to\Pi_\Phi$ for some $n>0$.
\end{proof}

\section{Conclusion}

In this paper we have presented geometrical, measure-theoretical and  logical characterizations for the strict coherence of books on \luk\ infinite-valued events. Our first result shows that, for any finite subset $\Phi$ of a finitely generated free MV-algebra ${\bf A}$, the set of all strictly coherent books on $\Phi$ coincides with the relative interior of the polytope of all coherent ones; the second characterization is a de Finetti-like theorem: a book on $\Phi$ is strictly coherent if and only if it extends to a faithful state of ${\bf A}$. Finally, our last theorem gives a characterization of coherence and strict coherence in terms of the provability relation of propositional \luk\ logic.   

We believe that this last result is interesting both from the logical and philosophical perspective as it may shed a light on an  {\em intuitive reading} of propositional \luk\ logic.  Specifically, it is of particular interest to put forward a comparison between the role of \luk\ logic prompted by Theorem \ref{thm:logic} in theories of uncertain reasoning and the semantics proposed in \cite{Marra}. There, the author, investigating the problem of  {\em artificial precision} in theories of vagueness based on real numbers as degrees of truth,  presents \luk\ logic as a suitable formal system to handle vague predicates\footnote{The author wishes to thank Eduardo Barrio for pointing out this to him.}.

In our future work we will mainly focus on extending the results of this paper to more general algebraic structures. Particularly promising seems to be the class of {\em finitely presented} MV-algebras (see \cite{MaSpa} and \cite[Theorem 6.3]{Mu12}). Further, we will address the problem of determining an NP-algorithm to check strict coherence for \luk\ events. The solution of this  problem would immediately yield that for each finite set  $\Phi$ of \luk\ events, $\mathscr{K}_\Phi^\mathbb{Q}$ is NP-complete (see \cite[\S 7]{FHM18}).
\vspace{.2cm}

\noindent{\bf Acknowledgement}. The author is grateful to the two referees for their careful reading and valuable comments. He is also much indebted to Lluis Godo, Hykel Hosni and Daniele Mundici for several conversations on the subject of this paper. The author acknowledges partial support by the Spanish Ram\'on y Cajal research program RYC-2016-19799; the Spanish FEDER/MINECO project TIN2015- 71799-C2-1-P and the SYSMICS project (EU H2020-MSCA-RISE-2015, Project 689176).

\end{document}